\theoremstyle{plain}
\newtheorem{theo}{Theorem}
\newtheorem{thm}{Theorem}[section]
\newtheorem{lem}[thm]{Lemma}
\newtheorem{coro}[thm]{Corollary}
\newtheorem{prop}[thm]{Proposition}
\theoremstyle{definition} \theoremstyle{definition}
\newtheorem{defn}[thm]{Definition}
\newtheorem{rem}[thm]{Remark}           
\theoremstyle{remark}
\newcommand{\A}{\mathbb{A}}
\newcommand{\Q}{\mathbb{Q}}
\newcommand{\bQ}{\mathbb{Q}}
\newcommand{\Z}{\mathbb{Z}}
\newcommand{\R}{\mathbb{R}}
\newcommand{\C}{\mathbb{C}}
\newcommand{\N}{\mathbb{N}}
\newcommand{\Ker}{\mathrm{Ker}}
\def\PGL{{\rm PGL}}
\def\Gal{{\rm Gal}}
\def\Pic{{\rm Pic}}
\def\vol{{\rm vol}}
\def\rH{{\mathrm H}}
\def\rZ{{\mathrm Z}}
\def\ra{{\rightarrow}}
\def\ux{{\underline{x}}}
\def\sH{{\mathsf H}}
\def\sN{{\mathsf N}}
\begin{document}

\title[Multiple mixing]{Multiple mixing for adele groups and rational points}

\author{Alexander Gorodnik}
\address{School of Mathematics\\
University of Bristol\\
Bristol BS8 1TW, U.K.
}
\email{a.gorodnik@bristol.ac.uk}

\author{Ramin Takloo-Bighash}
\address{University of Illinois at Chicago \\
851 S. Morgan str. (M/C 249) \\
Chicago, IL 60607, U.S.A.}
\email{rtakloo@math.uic.edu}

\author{Yuri Tschinkel}
\address{Courant Institute of Mathematical Sciences, N.Y.U. \\
 251 Mercer str. \\
 New York, NY 10012, U.S.A.}
\email{tschinkel@cims.nyu.edu}        

\keywords{Rational points, heights, mixing, counting}

\begin{abstract}
We prove an asymptotic formula for the number of rational points of bounded height on 
projective equivariant compactifications of $H\backslash G$, where $H$ is a connected simple 
algebraic group embedded diagonally into $G:=H^n$. 
\end{abstract}

\maketitle

\section*{Introduction}
\label{sect:intro}

Let $X\subset \mathbf P^n$ be a smooth projective variety over a number field $F$. Fix a height function 
\begin{equation}
\label{eqn:choice}
\sH\colon \mathbf P^n(F)\ra \R_{>0}
\end{equation}
and consider the counting function 
$$
\sN(X,T):=\{ x\in X(F) \,|\, \sH(x)\le T\}.
$$
 Manin's conjecture \cite{FMT} and its refinements by Batyrev--Manin \cite{BM}, Peyre \cite{Peyre}, 
and Batyrev--Tschinkel \cite{BT} predict precise asymptotic formulas for 
$\sN(X^{\circ},T)$ as $T\ra \infty$, 
where $X^{\circ}\subset X$ is an appropriate Zariski open subset of an algebraic variety with 
sufficiently positive anticanonical class. These formulas involve geometric invariants of $X$:
\begin{itemize}
\item the Picard group $\Pic(X)$ of $X$; 
\item the anticanonical class $-K_X\in \Pic(X)$;
\item the cone of 
pseudo-effective divisors $\Lambda_{\rm eff}(X)_{\R}\subset \Pic(X)_{\R}$,
\end{itemize}
and they depend on an {\em adelic metrization} $\mathcal L = (L, \|\cdot\|_v)$ of the polarization $L$
giving rise to the embedding $X\subset \mathbf P^n$, i.e., on a choice of the height function in \eqref{eqn:choice}.  
Given these, one introduces the invariants:
$$
a(L), b(L), \, \text{ and }\, c(\mathcal L)
$$
so that the number of $F$-rational points on $X^{\circ}$ of $\mathcal L$-height bounded by $T$ is, 
conjecturally, given by
\begin{equation}
\label{eqn:main}
\sN(X^{\circ},\mathcal L,T)=\frac{c(\mathcal L)}{a(L)(b(L)-1)!} T^{a(L)}\log(T)^{b(L)-1}(1+o(1)), \quad T\ra \infty,
\end{equation}
see, e.g., \cite{BT} for precise definitions of the constants. 

\

These conjectures have stimulated intense research;  
see \cite{T}, \cite{Oh}, \cite{browning}, \cite{CL} for surveys of the current state of this subject.
Of particular importance are equivariant compactifications of algebraic groups and their homogeneous spaces.
In all equivariant cases considered previously, it was essential that $X$ admits an action, with a dense orbit, 
of a solvable algebraic group. For example, the paper \cite{STT} proves Manin's conjecture 
for equivariant compactifications of the symmetric space $G\backslash (G\times G)$, a spherical variety. 
In this paper, we establish Manin's conjecture for a new class of varieties, 
which includes {\em nonspherical} varieties. 

\begin{theo}
\label{thm:main}
Let $H$ be a connected simple algebraic group defined over a number field $F$, 
$G:=H^r$ its $r$-fold product.
Let $X$ be a smooth projective $G$-equivariant compactification of $H\backslash G$, 
where $H$ acts on the left diagonally. 
Then $X$ satisfies Manin's conjecture, i.e., \eqref{eqn:main} holds for $L=-K_X$ and 
$$
X^{\circ} = H\backslash G \subset X, 
$$ 
and some non-zero constant $c(\mathcal{L})$. 
\end{theo}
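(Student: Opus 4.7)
The strategy is to study the height zeta function $Z(s)=\sum_{y\in Y(F)}\sH(y)^{-s}$ on $Y=H\backslash G$, establish its meromorphic continuation by an adelic spectral expansion, and extract the leading asymptotic via a quantitative multiple mixing bound for the diagonal action of $H$ on several copies of $H(F)\backslash H(\A)$.

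First I would fix a $G$-equivariant adelic metrization of $-K_X$ so that the height splits as $\sH(g)=\prod_v \sH_v(g_v)$. Identifying $Y(F)$ with $H(F)$-orbits on $G(F)=H(F)^r$, the sum defining $Z(s)$ runs over a fundamental domain for the diagonal $H(F)$-action, which I would unfold into an adelic integral
$$
Z(s)\ =\ \int_{H(F)\backslash H(\A)}\ \sum_{\gamma\in H(F)^r} \Phi_s(h\cdot\gamma)\, dh,
$$
where $\Phi_s$ is a smooth adelic test function built from the local heights and $h\cdot\gamma=(h\gamma_1,\ldots,h\gamma_r)$ is the diagonal left action.

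Next, I would spectrally decompose the theta-series on $(H(F)\backslash H(\A))^r$ pulled back by the diagonal embedding. The trivial component contributes the main term, essentially the Tamagawa-type integral $\int_{G(\A)}\Phi_s(g)\,dg$, whose rightmost pole is governed by the local factors $\int_{H(F_v)}\sH_v(g_v)^{-s}\,dg_v$ and yields the exponents $a(-K_X)$ and $b(-K_X)$. Nontrivial components are controlled by an effective multiple mixing estimate of the form
$$
\left|\int_{H(F)\backslash H(\A)}\phi_1(hg_1)\cdots\phi_r(hg_r)\,dh\ -\ \prod_{i=1}^{r}\int\phi_i\,dh\right|\ \le\ C\,\Xi(g_1,\ldots,g_r),
$$
with explicit decay $\Xi$ in the pairwise ``spread'' of the $g_i$. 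For simple $H$ such a bound follows from Howe--Moore decay of matrix coefficients combined with known uniform spectral gaps (Kim--Sarnak, Clozel), applied iteratively in the $r$-fold integral. A standard Tauberian step (Perron/Delange) then converts the pole structure of $Z(s)$ into the asymptotic \eqref{eqn:main}, and I would identify the residue with Peyre's constant $c(\mathcal{L})$ by rewriting the adelic integral as a Tamagawa measure on $X$ weighted by the $\alpha$-factor from the effective cone.

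The principal obstacle is the mixing estimate itself: since $Y=H\backslash G$ is nonspherical for $r\geq 3$, there is no elementary decomposition of $L^2(Y(\A))$ as a sum of matrix coefficients on $H$, so multiple mixing must be applied directly to smooth vectors whose Sobolev norms must be controlled against the degeneration $s\to a(-K_X)$. Securing effective multiple mixing that is uniform in the test functions and sharp enough to survive the Tauberian limit is the core analytic difficulty; matching the resulting constant to Peyre's prediction at ramified places is a secondary bookkeeping task involving local densities.
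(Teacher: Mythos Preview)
Your outline follows the height-zeta-function/spectral route of \cite{STT}, but the paper takes a deliberately different path and says so explicitly in the introduction: ``it uses neither the theory of height zeta functions nor spectral theory on adelic spaces.'' Instead of analytically continuing $Z(s)$, the paper proves a \emph{qualitative} multiple mixing statement (Theorem~\ref{thm:mult}) by measure-rigidity: it interprets the $r$-fold correlation as a sequence of translated $\Delta[r]$-orbit measures on $(H(F)\backslash H_W)^r$, and applies the adelic Ratner-type classification of \cite{GO} together with the elementary classification of connected subgroups between $\Delta[r]$ and $H[r]$ (Proposition~\ref{prop:equal}) to force every weak-$*$ limit to be Haar. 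Counting then proceeds not via a Tauberian theorem for $Z(s)$ but by the orbit-counting mechanism of \cite{GO}: equidistribution of diagonal periods over height balls (Corollary~\ref{c:period}) plus well-roundedness of those balls (Lemmas~\ref{lemm:balls} and the second well-roundedness lemma) and a volume asymptotic coming from the Igusa-integral analysis (Lemma~\ref{lem:volume}). The price is that no error term is obtained, and the constant is only shown to be nonzero, not matched to Peyre's prediction.

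The gap you flag in your own proposal is genuine and is precisely what the paper's method is designed to avoid. Iterating Howe--Moore two-mixing does yield qualitative $r$-mixing, but an \emph{effective} bound of the shape $|\int \phi_1(hg_1)\cdots\phi_r(hg_r)\,dh-\prod\int\phi_i|\le C\,\Xi(g_1,\ldots,g_r)$ with $\Xi$ decaying polynomially in all pairwise distances, uniformly in Sobolev norms, is not available in the literature for general simple $H$ and general $r$; the nonsphericity of $H\backslash H^r$ for $r\ge 3$ means there is no period formula to fall back on. Without such a bound you cannot push $Z(s)$ past $\Re(s)=a(-K_X)$ with the vertical control a Tauberian argument needs. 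Your unfolding formula is also not quite right as written (the height lives on $H(\A)\backslash G(\A)$, so the sum over $\gamma\in H(F)^r$ followed by an integral over $H(F)\backslash H(\A)$ does not recover $Z(s)$ without further manipulation). If you could establish the effective mixing you describe, your approach would strictly improve on the paper by giving a power-saving error term; as it stands, the paper's soft argument is what actually closes.
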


This generalizes the case $r=2$ treated in \cite{STT} and \cite{GMO} to arbitrary $r$. The proof presented here also 
works, with minor modifications, for semi-simple groups $H$. 
Compactifications of  the homogeneous space $H\backslash H^r$ have played an important role in work of L. Lafforgue on the Langlands' conjecture
over function fields of curves over finite fields (see, e.g., Chapter 3 in \cite{lafforgue}). 
The geometry of these compactifications is surprisingly rich.

\

Our proof combines ergodic-theoretic methods developed in \cite{GO} with geometric integration techniques developed in 
\cite{chambert-t02} and \cite{CLT-igusa}; in particular, it uses neither the theory of height zeta functions
nor spectral theory on adelic spaces. On the other hand, it does not allow to establish effective error terms
as in the $r=2$ case in \cite{STT}.  

\

{\bf Organization of the paper.}
In Sections \ref{sect:geometry} and \ref{sect:heights} we 
discuss geometric and analytic background and, in particular, establish 
meromorphic continuation of Igusa-type integrals 
(Theorem \ref{thm:hecke}) that implies an asymptotic formula for
volumes of height balls.  In Section \ref{sect:intermediate},
we give a classification of intermediate subgroups $M$ with
$H\subset M\subset H^r$. This result is used in Section \ref{sect:mixing}
where we establish the multiple mixing property for the adelic spaces
using measure-rigidity techniques.
Finally, our main result is deduced from multiple mixing in Section \ref{sect:count}.

\

\noindent
{\bf Acknowledgments.} 
We are grateful to Antoine Chambert-Loir, Amos Nevo, and 
Brendan Hassett for useful comments and suggestions.  
The first author was supported by EPSRC, ERC, and RCUK.
The second authors was partially supported by NSF grant DMS-0701753 and by the NSA grant 081031. 
The third author was partially supported by NSF grants DMS-0739380 and 0901777.

\section{Geometric background}
\label{sect:geometry}

Let $F$ be an algebraically closed field of characteristic zero,
$G$ a connected semi-simple algebraic group defined over $F$ and $H\subset G$ a connected closed subgroup.
Let $X$ be a projective equivariant compactification of $X^{\circ}:=H\backslash G$. 
Using $G$-equivariant resolution of singularities, we may assume that $X$ is smooth and that 
the boundary
$$
\cup_{\alpha\in \mathcal A} D_{\alpha} = X\backslash X^{\circ}
$$
is a divisor with normal crossings. 
If $H$ is a parabolic subgroup, then there is no boundary, i.e., $\mathcal A$ is empty,
and $H\backslash G$ is a generalized flag variety. Distribution of rational points of bounded
height on flag varieties was studied in \cite{FMT}. 

Throughout, we will assume that 
\begin{itemize}
\item $\mathcal A$ is not empty,
\item $X^{\circ}$ is affine (this holds, e.g., when $H$ is reductive), 
\item the groups of algebraic characters of $G$ and $H$ are trivial.
\end{itemize}  


Recall that a 1-parameter subgroup of $G$ is a homomorphism $\xi:\mathbb G_m\ra G$. 

\begin{lem}
\label{lemm:1-par}
Let $X$ be a smooth projective $G$-equivariant compactification of $H\backslash G$. 
Then for every boundary divisor $D_{\alpha}$, 
there exists a 1-parameter subgroup $\xi_{\alpha} : \mathbb G_m\ra G$ such that the generic point of 
$D_{\alpha}$ is in the limit of $\xi_{\alpha}$. 
\end{lem}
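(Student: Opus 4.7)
My approach combines the local structure of $X$ at the generic point $\eta_\alpha$ of $D_\alpha$ with the Cartan decomposition of $G$ over a complete discretely valued field. Let $R := \widehat{\mathcal{O}_{X,\eta_\alpha}}$ be the completed local ring of $X$ at $\eta_\alpha$. Smoothness of $X$ and codimension one of $D_\alpha$ give $R \cong K[[t]]$, where $K := F(\eta_\alpha)$ and $t$ is a local equation for $D_\alpha$. The morphism $\mathrm{Spec}(R) \to X$ sends the generic point into $X^\circ = H\backslash G$ and the special point to $\eta_\alpha$, yielding a $K((t))$-point $\bar g$ of $H\backslash G$ whose specialization in $X$ is $\eta_\alpha$.

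Next, I lift $\bar g$ to a $K'((s))$-point $g$ of $G$, where $K'((s))/K((t))$ is a finite (possibly ramified) extension chosen to trivialize the right $H$-torsor $G \to H\backslash G$; such a lift exists because $H$ is connected. Since $G$ is split over the algebraically closed field $F$, the Cartan decomposition over $K'((s))$ relative to a maximal $F$-split torus $T \subset G$ gives
$$
g = k_1 \cdot \xi(s) \cdot k_2, \qquad k_1, k_2 \in G(K'[[s]]),
$$
with $\xi : \mathbb{G}_m \to T \subset G$ a cocharacter. Because $T$ is $F$-split, its cocharacter lattice $X_*(T)$ is $F$-rational, so $\xi$ descends to an $F$-morphism; set $\xi_\alpha := \xi$.

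To check that $\xi_\alpha$ has the required property, apply the orbit map to $g$, obtaining $x_0 \cdot g = (x_0 k_1) \cdot \xi_\alpha(s) \cdot k_2$, where $x_0 = H \in H\backslash G$ is the base point. The factors $x_0 k_1$ and $k_2$ specialize at $s=0$ to some $y_0 \in X^\circ(K')$ and $g_2 \in G(K')$; by properness of $X$, the morphism $s \mapsto y_0 \cdot \xi_\alpha(s) \cdot g_2$ extends to $\mathbb{A}^1_{K'} \to X$, and its value at $s=0$ coincides with the image of the closed point of $\mathrm{Spec}(K'[[s]]) \to X$, which lies over $\eta_\alpha$. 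Thus, after an appropriate $G(K')$-translation absorbed into the choice of base point, the 1-parameter subgroup $\xi_\alpha$ carries a generic point of $X^\circ$ to the generic point of $D_\alpha$ in the limit $s \to 0$.

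The principal obstacle is reconciling the Cartan decomposition, a priori taking place over $K'((s))$, with the $F$-rationality demanded by the lemma, together with handling the ramified base change. For a split reductive $G$, the Cartan double cosets are classified by Weyl orbits in the $F$-rational cocharacter lattice $X_*(T)$, so $\xi_\alpha$ is intrinsically an $F$-morphism independently of the chosen valued field. A ramification of degree $e$ simply replaces $\xi_\alpha$ by $e \cdot \xi_\alpha$, still a 1-parameter subgroup of $G$ realizing the generic point of $D_\alpha$ as a limit; hence the conclusion holds unchanged.
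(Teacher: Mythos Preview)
The paper gives no argument of its own here; it simply refers to Proposition~4.2 of Birkes, a Hilbert--Mumford type statement that boundary orbits in an orbit closure under a reductive group are accessible via one-parameter subgroups. Your route via the Cartan decomposition over the complete discretely valued field $K'((s))$ is a genuinely different and, in principle, more constructive strategy.

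The execution, however, has a real gap at the decisive step. After writing $g = k_1\,\xi(s)\,k_2$ with $k_i \in G(K'[[s]])$, you replace the arc $s \mapsto (x_0 k_1(s))\,\xi(s)\,k_2(s)$, whose special value is $\eta_\alpha$, by the arc $s \mapsto y_0\,\xi(s)\,g_2$ obtained by setting $y_0 = x_0 k_1(0)$ and $g_2 = k_2(0)$, and assert that the two limits at $s=0$ coincide. But these are \emph{different} $K'((s))$-points of $X$, and reducing the integral factors modulo $s$ before passing to the limit can change that limit: writing $x_0 k_1(s)=y_0\cdot a(s)$ with $a(0)=e$, one sees that $\xi(s)^{-1}a(s)\xi(s)$ may acquire poles in $s$. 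Equivalently, in the two-parameter family $(u,s)\mapsto (x_0 k_1(s))\,\xi(u)\,k_2(s)$ on $\mathrm{Spec}\,K'[[s]][u]$, the rational extension across $\{u=0\}$ can fail precisely at the origin, so the diagonal limit ($u=s\to 0$, yielding $\eta_\alpha$) and the iterated limit ($s\to 0$, then $u\to 0$, yielding your candidate) need not agree. A correct version would first act by $k_2^{-1}\in G(K'[[s]])$ to obtain the arc $(x_0 k_1)\,\xi(s)$ with limit $\eta_\alpha\cdot k_2(0)^{-1}\in D_\alpha$ (using that $D_\alpha$ is $G$-stable since $G$ is connected), and then supply a separate argument---for instance via the Bialynicki--Birula stratification, or by verifying that the two-parameter extension is defined at the origin---to pass from this arc to a statement about the limit map on $X^\circ$. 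Your final claim that $y_0$ is itself a generic point of $X^\circ$ is likewise unsupported: nothing prevents $k_1(0)$ from lying in $G(F)$.
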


\begin{proof}
See, e.g., \cite[Proposition 4.2]{birkes}.
\end{proof}

We will identify line bundles and divisors with their classes. 

\begin{prop}
\label{prop:picard}
Let $G$ be a connected reductive group, $H\subset G$ a closed connected reductive subgroup, and 
$X$ a smooth projective $G$-equivariant compactification of $X^\circ = H\backslash G$.
Assume that $G$ and $H$ have no nontrivial algebraic characters.
Then 
\begin{enumerate}
\item the classes of irreducible boundary components $D_{\alpha}$ span the Picard group $\Pic(X)_{\Q}$ and 
the pseudo-effective cone $\Lambda_{\rm eff}(X)\subset \Pic(X)_{\R}$;
\item the class of the anticanonical line bundle 
is given by
$$
-K_X= \sum_{\alpha\in \mathcal A} \kappa_{\alpha}D_{\alpha}, 
$$
where all $\kappa_{\alpha} \ge 1$. 
\end{enumerate}
\end{prop}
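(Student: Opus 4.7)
The plan is to exploit the localization sequence for divisors together with a $G$-invariant nowhere-vanishing top form on $X^\circ$, computing its divisor along the boundary via the 1-parameter subgroups of Lemma~\ref{lemm:1-par}.

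For part (1), begin with the excision sequence
\[
\mathcal O(X^\circ)^\times / F^\times \;\longrightarrow\; \bigoplus_{\alpha \in \mathcal A} \Z \cdot [D_\alpha] \;\longrightarrow\; \Pic(X) \;\longrightarrow\; \Pic(X^\circ) \;\longrightarrow\; 0.
\]
Rosenlicht's theorem identifies the left-hand term with $\{\chi \in X^*(G) : \chi|_H = 1\}$, trivial by hypothesis. The standard sequence $X^*(H) \to \Pic(H\backslash G) \to \Pic(G)$, together with finiteness of $\Pic(G)$ for connected reductive $G$, gives $\Pic(X^\circ)_\Q = 0$. Hence $\{[D_\alpha]\}$ spans $\Pic(X)_\Q$. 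For the pseudo-effective cone, an irreducible effective divisor $E \not\subset X\setminus X^\circ$ satisfies $N\,E|_{X^\circ} = \mathrm{div}_{X^\circ}(f)$ for some $f \in \mathcal O(X^\circ)$, whence $N[E] = -\sum_\alpha \mathrm{ord}_{D_\alpha}(f)\,[D_\alpha]$ in $\Pic(X)$. A moving argument---replacing $f$ by $g\cdot f$ for generic $g \in G$, so the zero locus $g E$ meets the boundary properly---yields $\mathrm{ord}_{D_\alpha}(g\cdot f) \leq 0$ for every $\alpha$, giving the required non-negative combination.

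For part (2), the no-characters hypothesis makes $\det(\g/\h)^\vee$ a trivial one-dimensional $H$-representation, so a nonzero vector descends via $G \to H\backslash G$ to a $G$-invariant nowhere-vanishing top form $\omega$ on $X^\circ$. Regarded as a rational section of $K_X$, its divisor is $G$-invariant, hence supported on the boundary: $-K_X = \sum_\alpha \kappa_\alpha D_\alpha$ with $\kappa_\alpha = -\mathrm{ord}_{D_\alpha}(\omega)$. For the bound $\kappa_\alpha \geq 1$, pick a smooth point $p \in D_\alpha$ in its open $G$-orbit and local analytic coordinates $(y_1,\ldots,y_{d-1},t)$ with $D_\alpha = \{t=0\}$ in which the 1-parameter subgroup $\xi_\alpha$ of Lemma~\ref{lemm:1-par} acts diagonally with weights $(a_1,\ldots,a_{d-1},a)$ and $a>0$ (by the contracting property). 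Writing $\omega = h(y,t)\,dy_1 \wedge \cdots \wedge dy_{d-1} \wedge dt$ with $h$ regular nowhere-vanishing on $\{t \ne 0\}$, the equation $\xi_\alpha(s)^*\omega = \omega$ forces $\mathrm{ord}_{D_\alpha}(\omega) = -1 - \sum_i a_i/a$, so $\kappa_\alpha = 1 + \sum_i a_i/a$.

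The main obstacle is the final inequality $\sum_i a_i \geq 0$ needed to conclude $\kappa_\alpha \geq 1$. This requires a secondary argument leveraging the reductivity of $G$ and the structure of $D_\alpha$ as (the closure of) a $G$-orbit: one identifies $\sum_i a_i$ with the weight of $\xi_\alpha$ on a suitably normalized invariant section of $K_{D_\alpha}^{-1}$ on the dense orbit of $D_\alpha$, and proceeds inductively on $\dim X$ by applying the same invariant-form construction to $D_\alpha$. The companion moving-lemma step in (1) ultimately rests on the same contracting property of the $\xi_\alpha$, making Lemma~\ref{lemm:1-par} the technical heart of both parts of the proof.
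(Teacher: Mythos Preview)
Your approach to spanning $\Pic(X)_\Q$ via the localization sequence and the finiteness of $\Pic(G)$ is correct and arguably more direct than what the paper does. The paper instead proves both the Picard and the effective-cone statements with a single geometric argument: fix a $G$-linearized polarization $L$, and given an effective divisor $D$ with generic point in $X^\circ$, specialize $D$ along a 1-parameter subgroup so that a boundary component splits off, then induct on the $L$-degree of what remains.

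Your argument for the pseudo-effective cone, however, has a real gap. The boundary components $D_\alpha$ are $G$-stable, so for every $g \in G$ one has
\[
\mathrm{div}_X(g\cdot f)=g\cdot\mathrm{div}_X(f)=g(NE)+\sum_\alpha \mathrm{ord}_{D_\alpha}(f)\,D_\alpha,
\]
and hence $\mathrm{ord}_{D_\alpha}(g\cdot f)=\mathrm{ord}_{D_\alpha}(f)$. Translating $f$ moves only the interior part $gNE$ of the divisor, never the boundary multiplicities. Whether $gE$ meets the boundary properly is irrelevant to whether $f$ vanishes along $D_\alpha$: the closure of $\{f=0\}\cap X^\circ$ can meet $D_\alpha$ in codimension two while $f$ still vanishes identically on $D_\alpha$. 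Since $f$ is determined up to a scalar (units on $X^\circ$ being constant by hypothesis), there is no freedom to improve the expression. This is precisely the point where the paper's specialization-plus-degree-induction is needed, and where Lemma~\ref{lemm:1-par} actually enters the proof of part (1).

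For part (2) the paper simply cites \cite[Section 6]{balanced}, so there is no in-paper argument to compare against. Your invariant-form construction and the identification $\kappa_\alpha = -\mathrm{ord}_{D_\alpha}(\omega)$ are the standard first steps. The gap you flag in the inequality $\kappa_\alpha \ge 1$ is genuine, and your inductive sketch does not close it: you have not justified why an invariant top form on the open orbit of $D_\alpha$ exists (the stabilizer $H'$ of a point of $D_\alpha^\circ$ may well have nontrivial characters, so the hypothesis needed to rerun the construction can fail), nor why the tangential weights $a_i$ on $T_pD_\alpha$ should be nonnegative.
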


\begin{proof}
Fix a polarization $L$ of $X$ and let $X\subset \mathbf P^n$ be
the corresponding projective embedding.  
After taking a suitable multiple, we 
may assume that $L$ is $G$-linearized, i.e., the action of $G$ on $X$ extends 
to an action on the ambient $\mathbf P^n$ (by \cite[Corollary 1.6]{GIT}). 
Let $D$ be an effective divisor such that 
the generic point of $D$ is in $H\backslash G$. There exists 
a 1-parameter subgroup moving the generic point of $D$. After specializing,
$D$ breaks and at least one of the irreducible components of the limit is supported in the boundary. 
We can now apply induction on the $L$-degree to conclude that $D$ is equivalent to an effective divisor
with support in the boundary. 

On the other hand, the only invertible functions on $H\backslash  G$ are constants, by assumption. 
It follows that there are no relations between classes of the boundary components. 

For the second claim, see, e.g.,  \cite[Section 6]{balanced}. 
\end{proof}

Let $L$ be a big line bundle on X. We define 
\begin{align*}
&a(L) := \inf \{ t \in \bQ : t[L] + [K_X] \in \Lambda_{\textnormal{eff}}(X)\}, \\
&b(L) := \textnormal{ the maximal codimension of the face containing $a(L)L + K_X$}.
\end{align*}
By Proposition~\ref{prop:picard}, we have
$$
L=\sum_{\alpha\in \mathcal A} \lambda_{\alpha} D_{\alpha}, \quad \lambda_{\alpha} \in \Q_{>0},
$$
so that the corresponding invariants are given by
\begin{equation}
\label{eqn:a}
a(L) := \max_{\alpha} \frac{\kappa_{\alpha}}{\lambda_{\alpha}}
\end{equation} 
and  
\begin{equation}
\label{eqn:b}
b(L) :=\#\{ \alpha\in \mathcal A \, |\, a(L)=\frac{\kappa_{\alpha}}{\lambda_{\alpha}}\}. 
\end{equation} 

\begin{rem}
\label{rem:ab}
The invariants $a(L)$ and $b(L)$ may be computed even if $X$ is not smooth. 
Consider an equivariant resolution of singularities $\tilde{X}\ra X$, and let $\tilde{L}$ be the pullback of $L$ to $\tilde{X}$. 
Put
$$
a(L):=a(\tilde{L}), \quad \quad b(L):=b(\tilde{L}).
$$ 
A basic result is that this does not depend on the chosen resolution (see, e.g., \cite[Section 2]{balanced}. 
\end{rem}

The following proposition has been established in \cite{balanced}:

\begin{prop}
\label{prop:subgroup}
Let $M\subsetneq G$ be a closed connected subgroup containing 
$H$ and let $Y$ be the closure of $H\backslash M$ in $X$. Then
$$
(a(-K_X|_Y), b(-K_X|_Y)) < (a(-K_X), b(-K_X)), 
$$
in  the lexicographic ordering. 
\end{prop}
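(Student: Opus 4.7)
The plan is to reduce the comparison to formulas \eqref{eqn:a}--\eqref{eqn:b} applied to a well-chosen resolution of $Y$, and then to extract the strict decrease from the strict inclusion $M\subsetneq G$. As a preliminary observation, Proposition \ref{prop:picard}(2) gives $-K_X=\sum_{\alpha\in\mathcal{A}} \kappa_\alpha D_\alpha$ with $\kappa_\alpha\geq 1$, so \eqref{eqn:a}, \eqref{eqn:b} applied to $L=-K_X$ yield $a(-K_X)=1$ and $b(-K_X)=|\mathcal{A}|$. Thus the goal reduces to either $a(-K_X|_Y)<1$, or $a(-K_X|_Y)=1$ together with $b(-K_X|_Y)<|\mathcal{A}|$.

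First I would pick an $M$-equivariant resolution $\pi\colon \tilde Y\to Y$ so that $\tilde Y$ is a smooth projective $M$-equivariant compactification of $H\backslash M$ with simple normal crossing boundary $\bigcup_{j\in \mathcal{A}'} D'_j$. By Remark \ref{rem:ab}, the invariants of $-K_X|_Y$ equal those of $\pi^*(-K_X)$ computed on $\tilde Y$. Applying Proposition \ref{prop:picard} to $\tilde Y$ as a compactification of $H\backslash M$, I get
\[
-K_{\tilde Y}=\sum_{j\in \mathcal{A}'}\kappa'_j D'_j,\qquad \kappa'_j\geq 1,
\]
and since $-K_X$ is supported on $\bigcup_\alpha D_\alpha\subset X\setminus(H\backslash M)$, its pullback is supported on the boundary of $\tilde Y$:
\[
\pi^*(-K_X)=\sum_{j\in \mathcal{A}'} c_j D'_j, \qquad c_j=\sum_{\alpha\in \mathcal{A}} \kappa_\alpha\, m_{\alpha,j},
\]
where $m_{\alpha,j}\in \Z_{\geq 0}$ is the multiplicity of $D'_j$ in $\pi^*D_\alpha$. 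Formulas \eqref{eqn:a}--\eqref{eqn:b} then give
\[
a(-K_X|_Y)=\max_{j:\,c_j>0}\frac{\kappa'_j}{c_j},\qquad b(-K_X|_Y)=\#\Bigl\{j : \tfrac{\kappa'_j}{c_j}=a(-K_X|_Y)\Bigr\}.
\]

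The crucial comparison is then $\kappa'_j\leq c_j$ for every $j$ with $c_j>0$, with strict inequality on enough components. For the bound $\kappa'_j\leq c_j$ I would use an adjunction-style argument: writing $\pi^*(-K_X)-(-K_{\tilde Y})$ as the class arising from the normal bundle of $Y$ in $X$ (resp.\ of $\tilde Y$ in a suitable ambient resolution), one sees that for orbit closures in equivariant compactifications this difference is pseudo-effective, because the conormal directions to $Y$ at the generic point come from $G$-translations transverse to $M$ and thus are spanned by global sections vanishing on $Y$. This gives $a(-K_X|_Y)\leq 1$. In the equality case $a(-K_X|_Y)=1$, I would exhibit a boundary index of $X$ missing from the count for $Y$: by Lemma \ref{lemm:1-par} each $D_\alpha$ is reached by a 1-parameter subgroup $\xi_\alpha\colon\mathbb{G}_m\to G$, and because $M\subsetneq G$ (using the classification in Section \ref{sect:intermediate}) there is some $\alpha_0$ for which $\xi_{\alpha_0}$ leaves $M$; the associated $D_{\alpha_0}$ then either misses $Y$ or meets it with multiplicity large enough to force $\kappa'_j/c_j<1$ on every component $D'_j$ above $D_{\alpha_0}$. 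In either event the index $\alpha_0$ is not matched, whence $b(-K_X|_Y)<|\mathcal{A}|$.

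The hardest step is the effectivity / adjunction comparison producing the inequality $\kappa'_j\leq c_j$, together with the explicit identification of the missing boundary index in the equality case; both hinge on a careful matching of the boundary geometry of $\tilde Y$ with that of $X$ along the inclusion $H\backslash M\hookrightarrow H\backslash G$, and this is precisely where the structural information about intermediate subgroups $H\subset M\subsetneq G$ is used.
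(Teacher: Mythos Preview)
The paper does not supply a proof of this proposition at all: it is introduced by the sentence ``The following proposition has been established in \cite{balanced}'' and is simply imported from that reference. So there is no in-paper argument to compare your sketch against, and your attempt goes well beyond what the authors do here.

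That said, your sketch has two genuine gaps that would need to be filled before it could stand as a proof. First, the ``adjunction-style'' step is the whole content of the result, and you have only gestured at it. The claim that $\pi^*(-K_X)+K_{\tilde Y}$ is pseudo-effective is not a formal consequence of adjunction: $Y$ is typically singular and of high codimension, so one has to compare $-K_X|_Y$ with $-K_{\tilde Y}$ through a log-resolution and control the discrepancies, and the observation that the conormal directions are ``spanned by global sections vanishing on $Y$'' is a statement about the open orbit $H\backslash M$, not about what happens along the boundary strata where the multiplicities $\kappa'_j$ and $c_j$ are actually computed. This is precisely the work carried out in \cite{balanced}, and it is not short. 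Second, your treatment of the equality case invokes the classification in Section~\ref{sect:intermediate}, but that classification is specific to the situation $G=H[r]$ with $H$ simple, whereas Proposition~\ref{prop:subgroup} is stated (and cited from \cite{balanced}) for a general connected reductive $H$ inside a semi-simple $G$; moreover it sits in Section~\ref{sect:geometry}, logically prior to Section~\ref{sect:intermediate}. Even in the special case, the inference ``$\xi_{\alpha_0}$ leaves $M$, hence $D_{\alpha_0}$ contributes with $\kappa'_j/c_j<1$ on every component above it'' is asserted without justification; the one-parameter subgroup of Lemma~\ref{lemm:1-par} is not canonical, and a different choice could well land in $M$.

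In short: your outline identifies the correct invariants and the correct shape of the comparison, but the two load-bearing steps (effectivity of $\pi^*(-K_X)+K_{\tilde Y}$, and the strict drop in the equality case) are exactly what the external reference \cite{balanced} is there to supply, and your sketch does not replace that input.
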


\begin{rem}
\label{rem:abc}
This fails in the non-equivariant context, see \cite{BT-cubic} for a counterexample and \cite{BT} 
for a discussion of this ``saturation'' phenomenon.  
\end{rem}

\section{Heights and height integrals}
\label{sect:heights}

Let $F$ be a number field, $\A$ its ring of adeles, and $\A_{f}$ the subring of finite adeles.
Let $v$ be a place of $F$ and $F_v$ the corresponding completion; for nonarchimedian $v$ we let
$\mathfrak o_v$ denote the ring of $v$-integers and $\mathfrak m_v$ its maximal ideal.  

Let $X$ be a projective variety over $F$, $U\subset X$ a Zariski open subset with boundary 
$$
\cup_{\alpha\in \mathcal A} D_{\alpha} = X\setminus U
$$
being a normal crossings divisor. Here $D_{\alpha}$ are $F$-irreducible components, which could be
reducible over an algebraic closure $\bar{F}$ of $F$. For each $\alpha$ one can endow
the line bundle  $\mathcal O(D_{\alpha})$ with an adelic metric which allows to define 
local and global heights
\begin{equation}
\label{eqn:uu}
\sH_{D_{\alpha,v}} : U(F_v)\ra \R_{>0}, \quad \quad \sH_{D_{\alpha}}:=\prod_v \sH_{D_{\alpha,v}}.
\end{equation}
We recall the construction: 
Let $\Omega\subset X$ be a chart such that in $\Omega$ the divisor $D_{\alpha}$ 
is given by the vanishing of the function $x_{\alpha}$. 
For almost all places $v$  of $F$, and $u_v\in \Omega(F_v)$, the local height is given by
$$
\sH_{D_{\alpha,v}}(u_v)= |x_{\alpha}(u_v)|_v^{-1}. 
$$
At all other places, the height differs from ``the distance to the boundary'' 
function by a globally bounded function.

The heights in \eqref{eqn:uu} 
give rise to an {\em adelic height system}
$$
\begin{array}{rcl}
\oplus_{\alpha} \C^{\mathcal A} \times U(\A) & \stackrel{\sH}{\longrightarrow} &  \C \\
        (\sum s_{\alpha}D_{\alpha}, (u_v))   & \mapsto &  \prod_{\alpha} \prod_v \sH_{D_{\alpha}, v}(u_v)^{s_{\alpha}}
\end{array}
$$
which restricts to a Weil height, for $u\in U(F)$ and $(s_{\alpha})\in \Z^{\mathcal A}$.
See Section 2 of \cite{CLT-igusa} for more details on the construction. 
The geometric framework developed in Section 4 of \cite{CLT-igusa} allows to 
establish analytic properties of local and 
global integrals of the form
\begin{equation}
\label{eqn:u}
\int_{U(F_v)} \sH_v(\mathbf s,u_v)^{-1}\, \mathrm  d\tau_v, \quad  \int_{U(\A)} \sH(\mathbf s,u)^{-1}\, \mathrm d\tau,
\end{equation}
where $\tau_v$ and $\tau$ are certain Tamagawa measures defined in Section 2 of \cite{CLT-igusa}. 
Proposition 4.1.2 and Proposition 4.3.5 of \cite{CLT-igusa} provide meromorphic continuations
for integrals in \eqref{eqn:u}.

We will apply this theory in the setup of Section~\ref{sect:geometry}. 
Let $G$ be a connected semi-simple algebraic group over $F$, 
$H$ a closed connected reductive subgroup, and $X$ a smooth projective $G$-equivariant
compactification of the affine variety $X^{\circ}:=H\backslash G$ with boundary 
$$
\cup_{\alpha\in \mathcal A} D_{\alpha} = X\setminus G,
$$ 
which we assume to be a divisor with strict normal crossings. The divisors $D_{\alpha}$ can be equipped 
with an adelic metrization which defines local and global heights on $X^{\circ}(\A)$. 
Furthermore, $G$-equivariance implies that for all  but finitely many $v$, 
the local height functions $\mathsf H_v$ are right-invariant under
$G(\mathfrak o_v)$ (see, e.g., Section 3 in \cite{chambert-t02}). 
The local and global measures $\mathrm d\tau_v$ and $\mathrm d\tau$ coincide with  
suitably normalized Haar measures $\mathrm dx_v$ and $\mathrm dx$ on $X^\circ(\A)=(H\backslash G)(\A)$.

\begin{lem}
\label{lem:galois}
Let $G$ be a connected algebraic group defined over a field $F$ and $H$ a closed subgroup.
Let $X^{\circ}= H\backslash G$ and assume that 
\begin{equation}
\label{eqn:coho}
\Ker\left(\rH^1(F, H) \to \rH^1(F, G)\right) =0.
\end{equation}
Then 
$$
X^{\circ}(F) = H(F)\backslash G(F). 
$$
\end{lem}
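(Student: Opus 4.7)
The plan is to invoke the standard long exact sequence of pointed sets in non-abelian Galois cohomology attached to the inclusion of algebraic groups $H\hookrightarrow G$ and the quotient $X^\circ = H\backslash G$. Under the hypotheses of the lemma (in particular, $H$ closed in $G$ over $F$), this sequence takes the form
\begin{equation*}
1 \to H(F) \to G(F) \xrightarrow{\pi} X^\circ(F) \xrightarrow{\delta} \rH^1(F, H) \to \rH^1(F, G),
\end{equation*}
where $\pi$ is the natural quotient map, and the connecting map $\delta$ sends $x\in X^\circ(F)$ to the class of its scheme-theoretic fiber $\pi^{-1}(x)\subset G$, viewed as a left $H$-torsor over $F$. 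This torsor becomes trivial when pushed forward to $G$, since the multiplication map gives a canonical isomorphism $\pi^{-1}(x)\times^{H} G \cong G$ of right $G$-torsors; hence its image in $\rH^1(F,G)$ is trivial, consistent with the exactness of the sequence.

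From here, the argument is essentially automatic. By exactness at $\rH^1(F,H)$, the image of $\delta$ lies in $\Ker\bigl(\rH^1(F,H)\to \rH^1(F,G)\bigr)$, which is trivial by the hypothesis \eqref{eqn:coho}. Hence $\delta$ is constant at the distinguished element, and then exactness at $X^\circ(F)$ forces the quotient map $\pi\colon G(F)\to X^\circ(F)$ to be surjective. Reading exactness at $G(F)$ tells us that two elements of $G(F)$ have the same image in $X^\circ(F)$ if and only if they differ by an element of $H(F)$ acting on the left. Combining these two facts gives the claimed bijection $H(F)\backslash G(F) \to X^\circ(F)$.

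The only nontrivial ingredient is the existence and exactness of the displayed sequence, which is classical (see, e.g., Serre, \emph{Cohomologie Galoisienne}, Chapter I, \S 5.4). The one subtlety is ensuring that $X^\circ = H\backslash G$ really is the correct quotient as an $F$-scheme, not merely as an $\bar F$-scheme; but since $H$ is a closed subgroup of the smooth $F$-group $G$, the fppf quotient exists and agrees with the geometric one, so no additional work is needed. I expect no genuine obstacle in carrying out this plan — it is a direct invocation of the standard formalism.
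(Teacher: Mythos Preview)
Your proof is correct and follows exactly the same approach as the paper: both invoke the long exact sequence of pointed sets in non-abelian Galois cohomology associated to $1\to H\to G\to H\backslash G\to 1$, with the same reference to Serre, \emph{Cohomologie Galoisienne}, Chapter~I, \S5.4. You have simply spelled out the details that the paper leaves implicit.
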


\begin{proof} 
Consider the sequence 
$$
1\ra H\ra G\ra H\backslash G\ra 1,
$$
and the corresponding long exact sequence in Galois cohomology. 
See, e.g., \cite[Chapter 1, Section 5.4]{serre}. 
\end{proof}

\begin{coro}
\label{coro:gn} 
Let $H$ be a connected algebraic group defined over a field $F$, acting diagonally on $G:=H^r$
Then 
$$
X^\circ(F) = H(F)\backslash G(F).
$$
In particular, if $F$ is a number field, then 
\begin{equation}
\label{eqn:points}
X^\circ(F_v)=H(F_v)\backslash G(F_v)\quad \text{ and } \quad X^\circ(\A)=H(\A)\backslash G(\A)
\end{equation}
\end{coro}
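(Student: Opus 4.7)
The plan is to invoke Lemma~\ref{lem:galois} with the diagonal embedding $H \hookrightarrow G = H^r$, for which the cohomological hypothesis \eqref{eqn:coho} is essentially tautological. Indeed, since non-abelian $H^1$ commutes with finite products, for any field $k$ one has $\rH^1(k, H^r) = \rH^1(k, H)^r$ as pointed sets, and the map induced by the diagonal embedding is itself the diagonal $\xi \mapsto (\xi, \ldots, \xi)$, which has trivial kernel as a map of pointed sets. Lemma~\ref{lem:galois} applied over $F$ then immediately gives $X^{\circ}(F) = H(F) \backslash G(F)$, and applied over each completion $F_v$ gives $X^{\circ}(F_v) = H(F_v) \backslash G(F_v)$.

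For the adelic identity I would work with a suitable integral model. Fix a finite set $S_0$ of places of $F$ containing the archimedean ones and large enough that $H$, $G$, and the quotient $G \to X^{\circ}$ extend to smooth affine $\mathfrak{o}_v$-schemes with connected fibers for $v \notin S_0$. Then Lang's theorem applied to $H_{k_v}$ over the finite residue field $k_v$, combined with smoothness of $H$ over $\mathfrak{o}_v$ (which lifts classes via Hensel), forces $\rH^1(\mathfrak{o}_v, H) = 1$; the integral analogue of Lemma~\ref{lem:galois} then yields $X^{\circ}(\mathfrak{o}_v) = H(\mathfrak{o}_v) \backslash G(\mathfrak{o}_v)$ for every $v \notin S_0$. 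Assembling the local identities through the restricted product structure on adelic points produces $X^{\circ}(\A) = H(\A) \backslash G(\A)$.

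The single nontrivial step is the last one: the local-to-adelic passage requires an integral model compatible with the quotient, so that the restricted products on the two sides of $X^{\circ}(\A) = H(\A) \backslash G(\A)$ match up; this is exactly what the Lang-theorem vanishing at unramified places delivers. The cohomological input at the generic point is, by contrast, essentially free once one observes that the diagonal embedding induces the diagonal on $\rH^1$.
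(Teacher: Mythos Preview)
Your argument over fields is essentially the paper's: both verify the hypothesis of Lemma~\ref{lem:galois} by checking that the diagonal $H\hookrightarrow H^r$ induces an injection on $\rH^1$. You phrase this abstractly via $\rH^1(k,H^r)=\rH^1(k,H)^r$, while the paper unwinds the same fact at the level of cocycles, writing $c(\sigma)=(h_1^{-1}h_1^\sigma,\ldots,h_r^{-1}h_r^\sigma)$ and observing that diagonality forces all components to coincide. There is no real difference here.

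Where you diverge is the adelic identity. The paper's proof stops after establishing the vanishing of the kernel over a field and does not separately justify $X^\circ(\A)=H(\A)\backslash G(\A)$; you supply an honest argument via spreading out and Lang's theorem to get $X^\circ(\mathfrak o_v)=H(\mathfrak o_v)\backslash G(\mathfrak o_v)$ at almost all places, then patch through the restricted product. This is correct and is the standard way to fill that gap. It is worth noting, though, that in this particular case everything is much simpler than either argument suggests: the quotient $H^r\to H\backslash H^r$ is isomorphic, as a morphism of varieties, to $(h_1,\ldots,h_r)\mapsto (h_1^{-1}h_2,\ldots,h_1^{-1}h_r)$ from $H^r$ to $H^{r-1}$, and this visibly admits the algebraic section $(x_2,\ldots,x_r)\mapsto(e,x_2,\ldots,x_r)$. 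A section defined over $F$ makes $G(R)\to X^\circ(R)$ surjective for every $F$-algebra $R$, so all three identities (over $F$, over $F_v$, over $\A$) fall out at once with no cohomology and no Lang.
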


\begin{proof}
We have to show that 
$$
\Ker\left(\rH^1(F, H) \to \rH^1(F, G)\right) =0.
$$ 
Given a cocycle $c \in \rZ^1(F, H)$, $c: \Gal(\overline{F} /F) \to H (\overline{F})$, suppose that it is 
a coboundary in $\rZ^1(F, G)$. This means that $c(\sigma) = h^{-1} h^\sigma$ for $h = (h_1, \ldots, h_r) \in G$. 
Then $c(\sigma) = (h_1^{-1} h_1^\sigma, \ldots, h_r^{-1}h_r^\sigma)$. Since $c(\sigma) \in H(\overline{F})$, we have 
$$
h_i^{-1} h_i^\sigma = h_1^{-1} h_1^\sigma, \quad \forall i. 
$$
Hence $c(\sigma) = (h_1^{-1} h_1^\sigma, \ldots, h_1^{-1} h_1^\sigma)$, and as a result $c$ is a coboundary in $\rZ^1(F, H)$, as claimed. 
\end{proof}

The following theorem generalizes Theorem 7.1 of \cite{STT}. 

\begin{thm}
\label{thm:hecke}
Let $G$ be a connected semi-simple algebraic group and $H\subset G$ a closed subgroup, 
defined over a number field $F$, 
satisfying the vanishing condition \eqref{eqn:coho} for $F$ and all of its completions. 
Let $X$ be a smooth projective equivariant compactification of $X^{\circ}=H\backslash G$
with normal crossing boundary $\cup_{\alpha\in \mathcal A} D_{\alpha}$ and 
$$
\sH : \C^{\mathcal A}\times X^{\circ}(\A)\ra \C
$$ 
an adelic height system.   

For each automorphic character $\chi: G(\A)\ra \mathbb S^1$, trivial on $H(\A)$, there exist
a subset $\mathcal A(\chi)\subseteq \mathcal A$ and 
a function $\Phi_{\chi}$,  
holomorphic and bounded in vertical strips 
for $\Re(s_{\alpha}) > \kappa_{\alpha}-\epsilon$, for some $\epsilon>0$, 
such that for $\mathbf s=(s_{\alpha})$ in this domain one has
$$
\int_{X^\circ(\A)} \sH(\mathbf s, x)^{-1} \chi(x)\, \mathrm d x  = 
\prod_{\alpha\in \mathcal A(\chi)} \zeta_F(s_{\alpha}-\kappa_{\alpha}+1) 
\prod_{\alpha\notin\mathcal A(\chi)} \mathsf L(s_{\alpha}-\kappa_{\alpha}+1,\chi \circ \xi_{\alpha}) \cdot \Phi_{\chi}(\mathbf s), 
$$
where $\mathsf L$ are Hecke $\mathsf L$-functions. 
Moreover, $\mathcal A(\chi) = \mathcal A$ if and only if 
$\chi$ is trivial. 
\end{thm}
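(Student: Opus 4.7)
The plan is to adapt the Igusa-integral analysis of Chambert-Loir and Tschinkel \cite{CLT-igusa} to handle the twist by $\chi$. Since $\chi$ is trivial on $H(\A)$ and $X^\circ(\A) = H(\A)\backslash G(\A)$ by Corollary \ref{coro:gn}, the integrand descends to $X^\circ(\A)$, and I would first factor
$$
\int_{X^\circ(\A)} \sH(\mathbf s,x)^{-1}\chi(x)\,\mathrm dx = \prod_v \int_{X^\circ(F_v)} \sH_v(\mathbf s,x_v)^{-1}\chi_v(x_v)\,\mathrm dx_v
$$
as an Euler product. At a finite bad set $S$ of places (archimedean, ramified for $\chi$, or where $\sH_v$ is not $G(\mathfrak o_v)$-invariant) the local integrals are absorbed into the correction $\Phi_\chi$, whose holomorphy and polynomial boundedness in vertical strips on the claimed region follow from the local machinery of \cite[Section 4]{CLT-igusa}.

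The key step is the computation of the local integral at a good place $v \notin S$. Using the normal-crossings structure of the boundary, stratify $X(F_v)$ by which intersection $D_A = \bigcap_{\alpha \in A} D_\alpha$ the $v$-adic reduction meets, and on a $v$-adic neighborhood of a point of $D_A^\circ$ choose analytic coordinates in which $D_\alpha$ is cut out by $x_\alpha = 0$. The Tamagawa measure contributes Jacobian weight $|x_\alpha|_v^{\kappa_\alpha - 1}$ (via the formula for $-K_X$ in Proposition \ref{prop:picard}), the height contributes $|x_\alpha|_v^{-s_\alpha}$, and Lemma \ref{lemm:1-par} supplies the geometric input: $\xi_\alpha$ has the generic point of $D_\alpha$ in its limit and hence acts transversally to $D_\alpha$, so the $x_\alpha$-coordinate can be chosen so that the restriction of $\chi_v$ to the $x_\alpha$-axis agrees with $\chi_v \circ \xi_\alpha$ up to a unit. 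Summing the resulting geometric series gives the local Euler factor
$$
\prod_{\alpha \in A} \frac{1}{1 - (\chi_v \circ \xi_\alpha)(\pi_v)\, q_v^{-(s_\alpha - \kappa_\alpha + 1)}},
$$
and assembling the strata and multiplying over $v$ produces $\mathsf L(s_\alpha - \kappa_\alpha + 1, \chi \circ \xi_\alpha)$ in the $\alpha$-th variable, collapsing to $\zeta_F(s_\alpha - \kappa_\alpha + 1)$ precisely when $\chi \circ \xi_\alpha$ is trivial. I would then define $\mathcal A(\chi) := \{\alpha \in \mathcal A : \chi \circ \xi_\alpha = 1\}$.

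For the final assertion, suppose $\mathcal A(\chi) = \mathcal A$, so $\chi$ is trivial on $H(\A)$ and on $\xi_\alpha(\A^\times)$ for every $\alpha$. Let $M \subseteq G$ be the smallest closed $F$-algebraic subgroup containing $H$ and the images of all $\xi_\alpha$; it is connected since $H$ is. If $M$ were proper in $G$, then $Y := \overline{H\backslash M} \subset X$ would be irreducible of dimension $\dim(H\backslash M) < \dim X$; on the other hand, each $\xi_\alpha$ acts on $Y$ and, by Lemma \ref{lemm:1-par}, its limit contains the generic point of $D_\alpha$, forcing $D_\alpha \subset Y$ for every $\alpha$. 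Combining irreducibility of $Y$, the codimension-one boundary divisors lying in $Y$, and the presence in $Y$ of points of the open orbit $H\backslash M \subset X^\circ$ (disjoint from every $D_\alpha$) yields an immediate dimensional contradiction. Hence $M = G$, the subgroup of $G(\A)$ on which $\chi$ is forced to vanish is dense, and continuity gives $\chi = 1$.

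The main obstacle, in my view, is the local matching step in the second paragraph: verifying rigorously that at good $v$, in coordinates adapted to the normal-crossings boundary near $D_\alpha^\circ$, the character $\chi_v$ restricted to the transverse $x_\alpha$-direction really factors as $\chi_v \circ \xi_\alpha$ up to an integrable unit. This requires marrying the $G(\mathfrak o_v)$-equivariance of $\sH_v$ to a careful transversality analysis for $\xi_\alpha$ along $D_\alpha$, and is the natural twisted analog of the untwisted Denef-type computation carried out in \cite{CLT-igusa}.
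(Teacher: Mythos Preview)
Your overall strategy for the main body of the theorem matches the paper's: factor as an Euler product via Corollary~\ref{coro:gn}, discard a finite set of bad places into $\Phi_\chi$, stratify at good places by boundary strata $D_A^\circ$, and on the codimension-one strata use the transversality of $\xi_\alpha$ to identify the twist as the Hecke character $\chi\circ\xi_\alpha$. One minor imprecision: on strata with $|A|\ge 2$ the character $\chi_v$ need not factor as a product $\prod_{\alpha\in A}(\chi_v\circ\xi_\alpha)$, so your displayed local Euler factor for general $A$ is too clean. The paper handles this by observing that these deeper strata contribute only $O(q_v^{-(1+\delta)})$, so one may replace $\chi_v$ by $1$ there without affecting the regularizing $\mathsf L$-factors. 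Your identification of the ``main obstacle'' is exactly the content of the paper's local computation via the \'etale-local splitting $U_\alpha\cong Z_\alpha\times\mathbf A^1$ induced by $\xi_\alpha$.

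For the final assertion the paper does not argue geometrically; it simply refers to \cite[Proposition~8.6]{STT}. Your argument has a gap. In Lemma~\ref{lemm:1-par} the phrase ``the generic point of $D_\alpha$ is in the limit of $\xi_\alpha$'' means that the closed point $\lim_{t\to 0} H\xi_\alpha(t)\in X$ lies in the open stratum $D_\alpha^\circ$ (this is how it is used in the proof of the theorem, and is what Birkes' result gives); it does not assert that the scheme-theoretic generic point of $D_\alpha$ lies in the orbit closure. Hence from $\xi_\alpha\subset M$ you obtain only $Y\cap D_\alpha\ne\emptyset$, not $D_\alpha\subset Y$, and the dimensional contradiction does not follow. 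Moreover, even if one grants $M=G$ as algebraic $F$-groups, the last step needs more: $\chi$ factors through the finite abelian quotient $G(\A)/G(F)\pi(\tilde G(\A))$, and Zariski-generation of $G$ by $H$ and the $\xi_\alpha$ does not by itself imply that $H(\A)$ together with the $\xi_\alpha(\mathbb A^\times)$ surject onto that quotient, so the density/continuity conclusion is not immediate.
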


\begin{proof}
Using Corollary~\ref{coro:gn}, we rewrite the integral as
$$
\prod_v \int_{H(F_v)\backslash G(F_v)} \sH_v(\mathbf s, x_v)^{-1} \chi_v(x_v)\, \mathrm dx_v.
$$
For simplicity, we assume that the boundary divisors $D_{\alpha}$ are 
geometrically irreducible, otherwise, we need 
to work with Galois orbits as in \cite{STT}. 
We can ignore finitely many places, 
as they do not affect the poles of the Euler product (see, e.g., Section 4 of \cite{CLT-igusa}). 
At the remaining places, local integrals are computed in 
local analytic charts $\Omega_{A,v}$, labeled by boundary strata 
$$
D_A^{\circ}:=D_A\setminus  \cup_{A'\supsetneq A} D_{A'}, \quad 
D_A:=\cap_{\alpha\in A}D_{\alpha},
$$
with $A\subseteq \mathcal A$.
Observe that, 
\begin{itemize}
\item 
on charts with $|A|\ge 2$ we can replace $\chi$ by 1, these terms 
will not contribute to the leading poles of the Euler product (see, e.g., Section 9 of \cite{chambert-t02}); 
\end{itemize}

Using the $G(\mathfrak o_v)$-invariance of the local height functions, for almost all $v$, 
we may write the local height integrals as follows:
\begin{equation}
\label{eqn:local}
\int_{H(\mathfrak o_v)\backslash G(\mathfrak o_v)}  \chi_v(x_v) \, \mathrm d\mu_v  
+\sum_{\alpha\in \mathcal A} \int_{\Omega_{\alpha,v}} \sH_v(\mathbf s, x_v)^{-1}\chi_v(x_v) \, \mathrm d\mu_v +ET,
\end{equation}
where $ET$ is the error term, which for $\Re(s_{\alpha}) > \kappa_{\alpha} -\epsilon$, 
for all $\alpha\in \mathcal A$ and 
some $\epsilon >0$,  can be bounded by
$$
ET =  \frac{1}{q_v^{1+\delta}},  
$$
for some $\delta=\delta(\epsilon)>0$.
Here $q_v$ is the order of the residue field at $v$  
and $\mathrm d\mu_v$ is an appropriately normalized local Tamagawa measure.

To compute the local integrals on the charts $\Omega_{\alpha,v}$, 
we may assume that we are given rational functions $x_{\alpha}\in F(X)^\times$ 
and Zariski open charts $U_{\alpha}\subset X$ over $F$ such that in $U_{\alpha}$ 
the divisor $D_{\alpha}$ is given by the vanishing 
of $x_{\alpha}$. Let 
$$
\xi_{\alpha}:\mathbb G_{m}\ra G
$$ 
be a 1-parameter subgroup as in Lemma~\ref{lemm:1-par} 
so that the generic point of $D_{\alpha}$ is 
the limit of $\xi_{\alpha}(t)$, for $t\ra 0$, so that we may write,  
\'etale locally, $U_{\alpha}=Z_{\alpha}\times \mathbf{A}^1$, with $\mathbb G_m\hookrightarrow \mathbf A^1$. 
(A different choice of 1-parameter subgroups 
will not affect the poles of the local integrals below and
thus the poles of the Euler product.)
Expressing a $g_v\in H(F_v)\backslash G(F_v)\cap \Omega_{\alpha,v}$ 
as $g_v=(z_v,t_v)$, with $t_v\neq 0$, we have
$$
x_{\alpha}(g_v)= u_v(z_v,t_v) \cdot t_v,
$$
where $u_v(z_v,t_v)\in \mathfrak o_v^\times$ is a unit, for almost all $v$. 
On the other hand, we have
$$
\xi_{\alpha}(t_v)=(z_{\alpha,v}(t_v),t_v). 
$$
Thus 
$$
\lim_{t_v\ra 0} \frac{x_{\alpha}(\xi_{\alpha}(t_v))}{t_v} = w_{\alpha,v}(z_v), 
$$
where $w_{\alpha,v}(z_v)\in \mathfrak o_v^\times$ is a unit.  

Each automorphic character 
$$
\chi : G(\A) \ra \mathbb S^1
$$
and each 1-parameter subgroup $\xi_{\alpha}$ 
give rise to a Hecke character 
$$
\chi_{\alpha}:=\chi\circ \xi_{\alpha} : \mathbb G_m(F) \backslash \mathbb G_m(\A) \ra \mathbb S^1.  
$$
For almost all $v$, in the chart $\Omega_{\alpha,v}$ and for $t_v=t_{\alpha,v}$, with $|t_v|_v$ sufficiently small, we have: 
$$
\sH_v(\mathbf s, (z_v,t_v))^{-1}\chi_v((z_v,t_v)) =|t_v|^{s_{\alpha}-\kappa_{\alpha}} \chi_{\alpha,v}(t_v). 
$$
 The local integrals \eqref{eqn:local} take the form
$$ 
1+ \sum_{\alpha\in \mathcal A} 
\int_{\mathfrak m_v} |t_{v}|_v^{s_{\alpha}-\kappa_{\alpha}   +im_{\alpha, v}}\, \mathrm d t_v
\cdot \frac{1}{q_v^{{\rm dim}(X)-1}} + ET,  
$$
where
\begin{itemize}
\item 
$\mathrm dt_v$ a normalized Haar measure on $\mathfrak o_v$;
\item the local character is given by 
$$
\chi_{\alpha,v}(t_v)= |t_v|^{im_{\alpha,v}}, \quad \text{ for some } \quad m_{\alpha,v} \in \R.
$$
\end{itemize} 
(See the computations on p. 444 of \cite{chambert-t02}.)
We obtain
$$
\int_{H(F_v)\backslash G(F_v)} \sH_v(\mathbf s, x_v)^{-1}\chi_v(x_v) \, \mathrm dx_v = 1+
\left(\sum_{\alpha\in \mathcal A} \frac{1}{q_v^{s_{\alpha}-\kappa_{\alpha}+1+im_{\alpha,v}}}\right) + 
O(q_v^{-(1+\delta)}),
$$  
for some $\delta>0$, provided $\Re(s_{\alpha}-\kappa_{\alpha}+1)>\epsilon'$, for some $\epsilon' >0$. 
The corresponding Euler product is regularized by 
$$
\prod_{\alpha\in \mathcal A} \mathsf L(s_{\alpha}-\kappa_{\alpha}+1,\chi_{\alpha}). 
$$

It remains to observe that if 
$\chi : G(\A)\ra \mathbb S^1$ is an automorphic character such that 
$\chi_{\alpha}=1$, for all $\alpha\in \mathcal A$, then $\chi = 1$. 
This is analogous to \cite[Proposition 8.6]{STT}. 
\end{proof}

\begin{lem}[Well-roundedness of adelic height balls I]
\label{lemm:balls} 
Let $L$ be a class in the interior of the cone of effective divisors and $\sH$ the associated height. 
Then the corresponding height balls 
$$
B_T=\{x\in X^\circ(\A)\,:\, \sH(x)<T\}
$$
are well-rounded, i.e., 
$$
\lim_{\kappa\to 1^+}\limsup_{T\ra \infty} \frac{ \vol(B_{\kappa T})  -  \vol(B_{\kappa^{-1}T})}{\vol(B_T)} = 0.
$$ 
\end{lem}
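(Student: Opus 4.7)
The approach is to apply Theorem~\ref{thm:hecke} with the trivial character $\chi = 1$ (where $\mathcal A(\chi) = \mathcal A$), restrict the multi-variable integral to a single line to obtain the one-variable height zeta function associated to $L$, and extract an asymptotic for $\vol(B_T)$ by a Tauberian theorem. Well-roundedness will then follow from the asymptotic by a direct computation.

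First I would write $L = \sum_{\alpha\in \mathcal A} \lambda_\alpha D_\alpha$ with all $\lambda_\alpha > 0$, which is possible because $L$ lies in the interior of $\Lambda_{\rm eff}(X)$ and the boundary divisors span this cone by Proposition~\ref{prop:picard}. The associated height factors as $\sH(x) = \prod_\alpha \sH_{D_\alpha}(x)^{\lambda_\alpha}$, so the one-variable height zeta function
$$
Z(s) := \int_{X^\circ(\A)} \sH(x)^{-s}\, \mathrm dx
$$
is the restriction of the integral in Theorem~\ref{thm:hecke} to the line $s_\alpha = s\lambda_\alpha$. Specializing to $\chi = 1$ and using $\mathcal A(1) = \mathcal A$, that theorem yields
$$
Z(s) = \prod_{\alpha\in\mathcal A}\zeta_F(s\lambda_\alpha - \kappa_\alpha + 1) \cdot \Phi(s),
$$
with $\Phi$ holomorphic and bounded in vertical strips in a half-plane $\Re(s) > a(L) - \epsilon$ for some $\epsilon > 0$. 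From the definitions \eqref{eqn:a} and \eqref{eqn:b} of $a(L)$ and $b(L)$, it follows that $Z(s)$ has a pole at $s = a(L)$ of order exactly $b(L)$, and no other pole in the closed half-plane $\Re(s) \ge a(L)$.

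Next I would apply a standard Tauberian theorem of Landau--Ikehara--Delange type, in the form used in \cite{CLT-igusa} and \cite{chambert-t02}, to the Mellin transform $Z(s)$ of the cumulative volume function. This yields
$$
\vol(B_T) = c\cdot T^{a(L)}\log(T)^{b(L)-1}\bigl(1+o(1)\bigr), \quad T\to\infty,
$$
for a positive constant $c$ given by the leading Laurent coefficient of $Z(s)$ at $s = a(L)$. A direct computation using this regular variation then gives
$$
\frac{\vol(B_{\kappa T}) - \vol(B_{\kappa^{-1}T})}{\vol(B_T)} \; \longrightarrow \; \kappa^{a(L)} - \kappa^{-a(L)}, \quad T\to\infty,
$$
and the right-hand side tends to $0$ as $\kappa\to 1^+$, proving the lemma.

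The main obstacle is the Tauberian step: one needs polynomial growth bounds of $Z(s)$ in vertical strips, together with a little room for meromorphic continuation past the line $\Re(s) = a(L)$. Both of these follow from Theorem~\ref{thm:hecke} combined with classical convexity bounds for the Riemann/Dedekind zeta factors in vertical strips. A subsidiary technical point is the treatment of the finitely many ``bad'' places at which the local height is not $G(\mathfrak o_v)$-invariant, but these contribute only a holomorphic bounded factor and can be absorbed into $\Phi(s)$ exactly as in Section~4 of \cite{CLT-igusa}.
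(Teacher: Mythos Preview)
Your strategy---analytic continuation of the height zeta function via the $\chi=1$ case, restriction to the line $s_\alpha=s\lambda_\alpha$, Tauberian theorem, then the elementary ratio computation---is exactly the route the paper takes. The paper cites \cite{CLT-igusa} directly rather than Theorem~\ref{thm:hecke}, but the content is the same.

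There is one point the paper treats more carefully than you do. The claim that the finitely many bad local integrals ``contribute only a holomorphic bounded factor'' is justified in \cite{CLT-igusa} only when the metrization is \emph{smooth}; for a merely continuous adelic metric the archimedean local integrals need not extend holomorphically past $\Re(s)=a(L)$, so the Tauberian step is not available as stated. The paper therefore proves the asymptotic
\[
\vol(B_T)=c\,T^{a(L)}P(\log T)+O(T^{a(L)-\delta})
\]
only under the smoothness assumption, and then handles a general metrization by a sandwich argument: choose a smooth height $\sH'$ with $r^{-1}\sH'<\sH<r\sH'$, so that $B'_{r^{-1}T}\subset B_T\subset B'_{rT}$, and bound the well-roundedness ratio for $B_T$ by the corresponding ratio for $B'$ with $\kappa$ replaced by $r\kappa$. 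Letting first $r\to 1^+$ and then $\kappa\to 1^+$ finishes the proof. Your write-up should either restrict to smooth metrizations or insert this comparison step.
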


\begin{proof}
This is a corollary of the main theorem of \cite{CLT-igusa}, which establishes  
analytic properties of the height integrals  of the form 
$$
\int_{X^{\circ}(\A)} \sH(\mathbf s, x)^{-1}\, \mathrm dx. 
$$
The main pole comes from the Euler product defined by the adelic integral. 
We only need to show
that the local integrals are {\em holomorphic} for $\Re(\mathbf s)$ 
in a neighborhood of the shifted cone $\Lambda_{\rm eff}(X)+K_X\in \Pic(X)_{\R}$. 
This is immediate if the metrization is {\em smooth}.
In this case  Proposition 4.3.5 of \cite{CLT-igusa} shows that 
$$
\int_{X^{\circ}(\A)} \sH(\mathbf s, x)^{-1}\, \mathrm dx = 
\prod_{\alpha \in \mathcal A} \zeta_{F_{\alpha}} (s_{\alpha}-\kappa_{\alpha}+1) \cdot \Phi(\mathbf s),
$$
where $\Phi$ is a function 
holomorphic and bounded in vertical strips 
in the tube domain $\Re(s_{\alpha})>\kappa_{\alpha}-\epsilon$, for some $\epsilon>0$. 
 
Then we restrict to the line $sL$ and apply a Tauberian theorem. Since the Euler product 
is regularized by Dedekind zeta functions, which satisfy standard convexity 
bounds in vertical strips, a suitable Tauberian theorem gives an expansion
$$
\vol(B_T) = c \, T^{a(L)}P(\log(T)) + O(T^{a(L)-\delta}),
$$ 
where $c>0$,  $P$ is a monic polynomial of degree $b(L)-1$ and the implied constants and $\delta>0$ are explicit. 

The general case follows from the smooth case: 
for any constant $r>1$ there exists a smooth metrization 
such that the corresponding height function $\sH'$ satisfies
$$
r^{-1} \sH' < \sH < r\sH'.
$$
Thus for any $T>0$, we have
$$
B'_{r^{-1}T}\subseteq B_T \subseteq B'_{rT}
$$ 
so that 
$$
\limsup_T \frac{\vol(B_{\kappa T}) - \vol(B_{\kappa^{-1} T})}{\vol(B_T)} 
\leq \limsup_T \frac{\vol (B'_{r \kappa T}) - \vol (B'_{r^{-1} \kappa^{-1} T})}{\vol(B'_{r^{-1}T})},
$$
which can be made arbitrarily small by taking $r$ and then $\kappa$ close enough to 1.
This implies that the height balls are well-rounded. 
\end{proof}

\section{Intermediate subgroups} 
\label{sect:intermediate}

Let $H$ be a connected simple algebraic group defined
over an algebraically closed field of characteristic zero. 
Let $Z(H)$ be the center of $H$. 
For $n\in \N$, let 
$$
H[n] = H \times \cdots \times H
$$
be the $n$-th fold product of $H$.  
Let $\Delta[n]$ be the diagonal in $H[n]$, i.e., 
the subset of $H[n]$ consisting of elements of the form 
$(x,x,\ldots, x)$ with $x\in H$. 
The symmetric group $\mathfrak S_n$ acts on 
$H[n]$ by permuting coordinates. 
For $\sigma \in \mathfrak S_n$ and $M\subset H[n]$, 
let $\sigma(M)\subset H[n]$ be the image of $M$ under $\sigma$. 
We call subgroups $M, N$ of $H[n]$ \emph{permutation equal} 
if there is a $\sigma \in \mathfrak S_n$ such that $M = \sigma(N)$; such subgroups
are clearly isomorphic.  
This following proposition is used in the proof of the multiple mixing property
in Section \ref{sect:mixing}.

\begin{prop}
\label{prop:equal}
Let $H$ be a connected simple algebraic group and
$M$ a connected algebraic group such that 
$$
\Delta[n]\subseteq M\subseteq H[n].
$$
Then there exist $n_1, \ldots, n_k\in \N$ 
such that $\sum_{i=1}^k n_i = n$ and $M$ is permutation equal to
$$
\Delta[n_1] \times \cdots \times \Delta[n_k].
$$
\end{prop}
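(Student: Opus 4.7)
The plan is to reduce the classification to a linear-algebra question about unital subalgebras of $\C^n$ under the Hadamard product.

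First, I would pass to Lie algebras: let $\mathfrak{h}$ be the Lie algebra of $H$ and $\mathfrak{m} \subseteq \mathfrak{h}^{\oplus n}$ the Lie algebra of $M$, so that $\Delta\mathfrak{h} \subseteq \mathfrak{m}$. Since $\mathfrak{m}$ is a Lie subalgebra containing $\Delta\mathfrak{h}$, we automatically have $[\Delta\mathfrak{h}, \mathfrak{m}] \subseteq \mathfrak{m}$, so $\mathfrak{m}$ is a $\Delta\mathfrak{h}$-submodule of $\mathfrak{h}^{\oplus n}$ under the adjoint action.

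Because $H$ is simple, the adjoint representation of $\mathfrak{h}$ is absolutely irreducible, so Schur's lemma gives $\mathrm{End}_{\mathfrak{h}}(\mathfrak{h}) = \C$. Viewing $\mathfrak{h}^{\oplus n}$ as $\mathfrak{h} \otimes_{\C} \C^n$, the standard isotypic parametrization
\[
N \;\longmapsto\; \mathrm{Hom}_{\mathfrak{h}}(\mathfrak{h}, N) \subseteq \C^n, \qquad V \;\longmapsto\; \mathfrak{h} \otimes V
\]
identifies the $\Delta\mathfrak{h}$-submodules of $\mathfrak{h}^{\oplus n}$ with linear subspaces of $\C^n$. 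Hence $\mathfrak{m} = \mathfrak{h} \otimes V$ for a unique $V \subseteq \C^n$, and the containment $\Delta\mathfrak{h} \subseteq \mathfrak{m}$ forces $(1,\ldots,1) \in V$.

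The crucial step is then the elementary bracket identity
\[
[X \otimes v,\; Y \otimes w] \;=\; [X,Y] \otimes (v \star w),
\]
where $\star$ denotes the coordinate-wise (Hadamard) product on $\C^n$. Thus $\mathfrak{m}$ is bracket-closed in $\mathfrak{h}^{\oplus n}$ if and only if $V$ is $\star$-closed, i.e., $V$ is a unital subalgebra of the semisimple commutative algebra $(\C^n,\star) \cong \C \times \cdots \times \C$. By the Wedderburn structure theorem, every such $V$ is spanned by a family of mutually orthogonal idempotents summing to $(1,\ldots,1)$; since the idempotents of $\C^n$ are exactly the indicator functions of subsets of $\{1,\ldots,n\}$, these spanning idempotents determine a partition $\{1,\ldots,n\} = S_1 \sqcup \cdots \sqcup S_k$.

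Finally, setting $n_i := |S_i|$ and reordering coordinates by $\sigma \in \mathfrak{S}_n$ so that each $S_i$ becomes a block of consecutive indices, $\mathfrak{m}$ coincides with the Lie algebra of $\Delta[n_1] \times \cdots \times \Delta[n_k]$. Since $M$ is connected and we work in characteristic zero, $M$ is the unique connected subgroup with Lie algebra $\mathfrak{m}$, which gives the asserted permutation equality. The main obstacle, in my view, is simply arriving at the Schur--Hadamard translation: once the problem is recast as classifying unital subalgebras of $(\C^n, \star)$, everything that follows is routine commutative algebra, with no intervention of the detailed structure of $H$ beyond its simplicity.
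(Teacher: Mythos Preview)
Your argument is correct and takes a genuinely different route from the paper. The paper proceeds group-theoretically: it first proves a Goursat-type lemma (Lemma~\ref{lem:start}) showing that if $(x_1,\ldots,x_r)\in H[r]$ has pairwise distinct entries modulo $Z(H)$, then the closed subgroup generated by all diagonal conjugates $(\delta x_1\delta^{-1},\ldots,\delta x_r\delta^{-1})$ is all of $H[r]$; it then picks an element of $M$ of maximal ``rank'' $r$, uses the lemma to force $\Delta[n_1]\times\cdots\times\Delta[n_r]\subseteq M$, and argues that equality must hold lest the rank could be increased. Your approach instead linearises immediately: passing to Lie algebras and invoking Schur for the adjoint representation converts the problem into classifying unital subalgebras of $(\C^n,\star)$, after which the partition drops out of the idempotent structure. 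Your method is shorter and more conceptual, and makes transparent why only simplicity of $H$ (via irreducibility of $\mathrm{ad}$ and perfection $[\mathfrak h,\mathfrak h]=\mathfrak h$) is needed; the paper's approach, on the other hand, yields Lemma~\ref{lem:start} as a standalone group-theoretic statement that does not pass through Lie algebras and could in principle be adapted to settings where the Lie correspondence is unavailable. Both arguments rely on the ambient hypotheses of Section~\ref{sect:intermediate} (algebraically closed field of characteristic zero), so neither is more general in the context at hand.
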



The remainder of this section is devoted to a proof of Proposition~\ref{prop:equal}.
The main step is the following version of Goursat's lemma:

\begin{lem}
\label{lem:start} 
Let $\underline{x}_r = (x_1, \ldots, x_r) \in H[r]$ be such that for all $i$, 
we have $x_i \notin Z(H)$ and for all $i \ne j$, we have $x_i x_j^{-1} \notin Z(H)$. 
Let $L_r\subseteq H[r]$ be the smallest subgroup containing 
$$
\Gamma_r:= \{(\delta x_1 \delta^{-1}, \ldots, \delta x_r \delta^{-1} )\, |\, \delta \in H\}.
$$
Then $L_r=H[r]$.
\end{lem}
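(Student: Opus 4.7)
The plan is to proceed by induction on $r$.

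For the base case $r=1$, the set $\Gamma_1$ is the conjugacy class of $x_1$, so $L_1$ is the normal subgroup it generates. In a connected simple algebraic group, a closed normal subgroup either is contained in $Z(H)$ or equals $H$; since $x_1\notin Z(H)$, this forces $L_1=H$.

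For the inductive step, assume the statement holds for $r-1$. For each $i$, the projection $\pi_{\widehat i}\colon H[r]\to H[r-1]$ omitting the $i$-th coordinate sends $\Gamma_r$ to the corresponding $\Gamma_{r-1}$ for the subtuple $\underline x_{\widehat i}$, which still satisfies the hypotheses; hence $\pi_{\widehat i}(L_r)=H[r-1]$ by induction, and in particular every coordinate projection $L_r\to H$ is surjective. The kernel of $\pi_{\widehat i}|_{L_r}$ cuts out a subgroup $K_i'\subseteq H$ sitting in the $i$-th factor, and conjugating it by preimages in $L_r$ of arbitrary $h\in H$ in the $i$-th coordinate shows $K_i'$ is normal in $H$. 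By simplicity, $K_i'=H$ or $K_i'\subseteq Z(H)$. If some $K_i'=H$, then $L_r$ contains the full $i$-th factor and also surjects onto the other $r-1$ factors, so $L_r=H[r]$ and we are done.

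It remains to rule out the case when all $K_i'\subseteq Z(H)$. Passing to $\bar H:=H/Z(H)$, the image $\bar L_r\subseteq\bar H^r$ has trivial kernel under every coordinate-omitting projection, and dropping (say) the last coordinate realizes $\bar L_r$ as the graph of an algebraic homomorphism $f\colon\bar H^{r-1}\to\bar H$. Since $\bar H$ is simple with trivial center, the restrictions of $f$ to distinct $\bar H$-factors have commuting images in $\bar H$, and any nontrivial such restriction is surjective (its image is a nontrivial connected normal subgroup of the simple group $\bar H$); so at most one restriction is nontrivial, and $f$ is either zero or of the form $\phi\circ\operatorname{pr}_j$ for some $j<r$ and an automorphism $\phi$ of $\bar H$.

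The crucial extra input is that $L_r$, and hence $\bar L_r$, is stable under diagonal conjugation by $H$, which translates to $\operatorname{Inn}(h)\circ f=f\circ(\operatorname{Inn}(h),\ldots,\operatorname{Inn}(h))$ for every $h$. For $f=\phi\circ\operatorname{pr}_j$ this says $\phi\operatorname{Inn}(h)\phi^{-1}=\operatorname{Inn}(h)$, i.e., $\operatorname{Inn}(\phi(h))=\operatorname{Inn}(h)$, so $\phi(h)=h$ for all $h$ because $Z(\bar H)=\{e\}$. Hence $\phi=\operatorname{id}$, and evaluating $\Gamma_r\subseteq L_r$ at $\delta=e$ gives $\bar x_r=\bar x_j$, i.e., $x_rx_j^{-1}\in Z(H)$, contradicting the hypothesis; similarly $f=0$ forces $x_r\in Z(H)$. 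The main obstacle I anticipate is carrying out the Goursat-style analysis rigorously; once the graph structure modulo the center is in place, the diagonal-conjugation equivariance dispatches the remaining automorphism $\phi$ cleanly.
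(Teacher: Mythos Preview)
Your argument is correct, modulo one step that needs more care. From ``all $K_i'\subseteq Z(H)$'' it does not immediately follow that $\bar L_r$ has trivial kernel under each $\bar\pi_{\hat i}$: the kernel in $\bar H^r$ picks up elements $l\in L_r$ whose off-$i$ coordinates merely lie in $Z(H)$, not necessarily equal to $e$. The easiest fix is to reduce to $Z(H)=1$ at the outset (as the paper does), recovering the general case at the end via the finite-index argument; alternatively, observe that the same conjugation argument shows each $\bar K_i':=\ker(\bar\pi_{\hat i}|_{\bar L_r})$ is normal in $\bar H$, hence trivial or all of $\bar H$, and in the latter case $\bar L_r=\bar H^r$ forces $L_r=H[r]$ by connectedness, contradicting $K_i'\subseteq Z(H)$.

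Where your route genuinely diverges from the paper is in the analysis of the graph homomorphism. The paper (working with $Z(H)=1$) deduces from equivariance that $\varphi(\delta_r)=\delta$ for all $\delta$, then shifts $\underline x_r$ by the diagonal element $\varphi(\underline x_r)$ into $\ker\varphi$ and applies the inductive hypothesis \emph{a second time} to this new tuple to pin down $\ker\varphi$ and hence $L_{r+1}$. You instead decompose $f\colon\bar H^{r-1}\to\bar H$ via its restrictions to the factors, use that their images are closed connected normal subgroups which pairwise commute to force $f$ through a single coordinate $\operatorname{pr}_j$, and then use equivariance to show the remaining automorphism $\phi$ is the identity. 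Your route avoids the second invocation of the inductive hypothesis and is arguably more direct; the paper's route keeps the argument as a pure Goursat recursion without needing to analyze homomorphisms $\bar H^{r-1}\to\bar H$.
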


\begin{proof}
We assume that
$Z(H)=1$
and proceed by induction on $r$. Note that $\Gamma_1$ is nontrivial and that it is 
closed under conjugation so that the closed subgroup of $H=H[1]$ generated by $\Gamma_1$ is normal. 
Since $H$ is simple,  $L_1=H$. 

Assume the statement for $r>1$. Let $L_{r}$ be the subgroup corresponding to 
the vector $\underline{x}_{r} : = (x_1, \ldots, x_{r})$, we assume that $L_{r} = H[r]$. 
Clearly, $L_{r}$ is the projection of 
$L_{r+1}$ onto the first $r$ entries. Applying the case $r=1$, 
we deduce that the projection of $L_{r+1}$ onto the last entry is equal to $H$. 
Suppose that there is an element $h \in H[r]$ such that for two distinct elements $u,v \in H$, 
we have $(h, u ) \in L_{r+1}$ and $(h, v) \in L_{r+1}$. 
Then $(e_{r}, uv^{-1}) \in L_{r+1}$,
where $e_{r}$ denotes the vector in $H[r]$ consisting of identity elements in every entry. 
Again by the case when $r=1$, we see that $\{e_{r}\}\times H \subset L_{r+1}$. Since the projection onto the 
first $r$ coordinates is surjective, $L_{r+1} = H[r] \times H = H[r+1]$,
as required.  
It remains to rule out the case when 
for every $h \in H[r]$ there is a unique $u:= u(h)$ such that $(h, u(h)) \in L_{r+1}$. 
It follows from the uniqueness that the map $\varphi:h \mapsto u(h)$ is a homomorphism 
$H[r] \to H$, and
$$
L_{r+1}=\{(h, \varphi(h))\, |\, h\in H[r]\}.
$$
Moreover, $\varphi$ is surjective.
By construction, if $(h, \varphi(h)) \in L_{r+1}$, then for any $\delta \in H$, we have 
$$
(\delta_{r} h \delta_{r}^{-1},\delta \varphi(h) \delta^{-1}) \in L_{r+1},
$$
where $\delta_r$ denotes the vector in $H[r]$ with $\delta$ in every entry. 
It follows from uniqueness that 
$$
\varphi(\delta_{r} h\delta_{r}^{-1}) = \delta \varphi(h) \delta^{-1}.
$$
Hence, $\delta^{-1} \varphi(\delta_{r})$ commutes with $\varphi(h)$ for every $h\in H[r]$.
Since $\varphi$ is surjective, we see that $\varphi(\delta_r)=\delta$ for every $\delta\in H$.
Now let $\delta=\varphi(\underline{x}_r)$ and $\underline{y}_r=\underline{x}_r\delta_r^{-1}$.
Then $\underline{y}_r\in \ker(\varphi)$, and $y_iy_j^{-1}=x_ix_j^{-1}\ne e$ for $i\ne j$.
If $y_i\ne e$ for all $i$, then it follows from the inductive assumption 
applied to $\underline{y}_r$ that $\ker(\varphi)=H[r]$, which contradicts surjectivity of $\varphi$.
Hence, there exists an $\ell=1,\ldots,r$ such that $y_{\ell}=e$. Since $y_i\ne y_j$ for $i\ne j$,
such $\ell$ is unique. Again, by the inductive assumption,
$H[\ell-1]\times \{e\}\times H[r-\ell]\subseteq \ker(\varphi)$. We conclude that
$$
L_{r+1}=\{(h_1,\delta,h_2,\delta)\, |\, h_1\in H[\ell-1], \delta\in H, h_2\in H[r-\ell]\}.
$$
On the other hand, $\underline{x}_{r+1}\in L_{r+1}$ and $x_\ell\ne x_{r+1}$, a contradiction.

To treat the general case, we consider the projection
$\pi: H[r]\to \bar H[r]$, where $\bar H=H/Z(H)$. Since $Z(\bar H)=1$, it follows from
above that $\pi(L_r)=\bar H[r]$ and $H[r]=L_r\cdot \ker(\pi)$.
Thus $L_r$ has finite index in $H[r]$, and hence $L_r=H[r]$.
\end{proof}

\begin{defn}\label{def:adm}
Let $r \leq n$ be integers. An {\it admissible embedding} of $H[r]$ in
$H[n]$ is the obvious map
$$
\begin{array}{rcl}
H[r] & \to &  \Delta[n_1] \times \cdots \times \Delta[n_r] \\
(h_1,\ldots, h_r) & \mapsto & (\underbrace{h_1, \ldots, h_1}_{n_1}, \underbrace{h_2,\ldots,h_2}_{n_2}, \ldots , 
\underbrace{h_{r}, \ldots, h_r}_{n_r}),
\end{array} 
$$
with $\sum_i n_i =n$, followed by a permutation of coordinates. An
{\it admissible subgroup} of $H[n]$ is the image of an
admissible embedding.
\end{defn}

We note that there is a one-to-one correspondence between admissible subgroups
of $H[n]$ and partitions of the set $\{1,\ldots,n\}$.


\begin{defn}
Given $r \leq n$, we say an element $\underline{x} \in H[n]$ is of rank $\leq
r$, if $\underline{x} \in \iota (H[r])$ for some admissible embedding $\iota$.
We say $\underline{x}$ is of rank $r_0$, written $r(\underline{x})=r_0$, if $r_0$ is the
smallest number $r$ such that $\underline{x}$ is of rank $\leq r$.
\end{defn}


It is clear that for every $\underline{x} \in
H[n]$, $r(\underline{x}) \leq n$.  Note that if $\underline{x} \in H[n]$ and $\underline{\delta} \in
\Delta[n]$ then
$$
r(\underline{x}\cdot \underline{\delta}) = r(\underline{x}), \quad \text{ for } \underline{x} \in H[n].
$$

\begin{proof}[Proof of Proposition~\ref{prop:equal}] 
A reformulation of the
statement of the proposition is that if $M$ is a connected
subgroup of $H[n]$ satisfying
$$
\Delta[n] \subset M \subset H[n],
$$
then $M$ is admissible. 
Since the isogeny $\pi: H[r]\to \bar H[r]$, where $\bar H=H/Z(H)$,
define a bijection between closed connected subgroup of $H[r]$ and $\bar H[r]$,
it is sufficient to prove the claim assuming that $Z(H)=1$.

Let $r= \max_{\ux \in M} r(\ux)$, and let
$\underline{x}$ be an element of $M$ which realizes this maximum. As $\Delta[n]
\subset M$, we may assume that no entry of $\underline{x}$ is equal to identity.
After rearranging the coordinates, if necessary, we may
assume that
$$
\ux = (x_1, \ldots, x_1, x_2, \ldots, x_2, \cdots, x_r, \ldots, x_r) \in \Delta[n_1] \times \cdots \times \Delta[n_r]
$$
where $x_i x_j^{-1}\ne e$ for $i\ne j$. Then since $\Delta[n]\subset M$,
it follows from Lemma~\ref{lem:start} that
$$  
N:=\Delta[n_1] \times \cdots \times \Delta[n_r] \subseteq M.
$$
To prove the proposition it suffices to establish that 
$N = M$.
Indeed, if $M$ were larger than $N$, multiplying a generic element of $N$ 
by an element of $M\setminus N$ we would get an element $\ux'$ with $r(\ux')>r(\underline{x})$, 
a contradiction.

\end{proof}

\section{Multiple mixing}
\label{sect:mixing}

Let $H$ be a connected semi-simple algebraic group defined over a number field $F$. 
The aim of this section is to prove the multiple mixing property for
the adelic homogeneous space 
$H(F)\backslash H(\A)$.
However, when the group $H$ is not simply connected,
$\mathsf L^2(H(F)\backslash H(\A))$ contains nontrivial characters, and the multiple mixing property
holds only on a subset $Y_W$ of $Y$, which we now introduce.
Let $\pi: \tilde{H} \to H$ be the universal cover of $H$ and
$W$ a compact subgroup of $H(\A)$ such that $W \cap H(\A_f)$ is open in $H(\A_f)$. 
We set
\begin{equation}\label{eq:h_w}
H_W := H(F) \pi(\tilde{H}(\A))W.
\end{equation}
By \cite{GO}, Corollary 4.10, $H_W$ is a normal 
closed co-abelian subgroup of finite index in $H(\A)$. We 
consider the homogeneous space
$$
Y_W := H(F) \backslash H_W,
$$
equipped with the normalised Haar measure $\mathrm dy$. Let 
$\mathsf C_c(Y_W)^W$ denote the space of continuous compactly supported and $W$-invariant functions 
on $Y_W$.

The following theorem is an adelic version of the multiple mixing of S.~Mozes \cite{moz}.

\begin{thm}[multiple mixing]
\label{thm:mult}
Let $H$ be a connected simple group over $F$ and 
  $$
  \{ (b_1^{(n)}, \ldots, b_r^{(n)})\}_{n\in \N} \subset H_W[r]= H_W \times \cdots \times H_W
  $$
a sequence such that for all $i\ne j$, 
$$
\lim_{n\ra \infty} (b_i^{(n)})^{-1}b_j^{(n)} = \infty\quad\hbox{in $H_W$.}
$$ 
Then for all $f_1, \ldots, f_r \in \mathsf C_c(Y_W)^W$, we have
\begin{equation}
\label{eqn:seq}
\lim_{n\ra \infty}  \int_{Y_W} f_1(yb_1^{(n)}) \cdots f_r(yb_r^{(n)}) \, \mathrm d y 
= \left(\int_{Y_W} f_1 \,\mathrm d y\right) \cdots \left(\int_{Y_W} f_r \, \mathrm d y\right).
\end{equation}
\end{thm}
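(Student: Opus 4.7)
My plan is to reinterpret the left-hand side of \eqref{eqn:seq} as the pairing of a product test function on $Y_W^r$ against a sequence of probability measures $\mu_n$, and then to analyse the weak-$*$ limits of $\mu_n$ using the adelic measure rigidity of \cite{GO} combined with the intermediate-subgroup classification of Section~\ref{sect:intermediate}.

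\emph{Setup and invariance.} Let $\mu_n$ be the push-forward of $\mathrm{d}y$ under the embedding
$$
\iota_n\colon Y_W \to Y_W^r, \qquad y \mapsto \bigl(y b_1^{(n)}, \ldots, y b_r^{(n)}\bigr).
$$
Each $\mu_n$ is a probability measure on $Y_W^r$ whose $r$ marginals all equal $\mathrm{d}y$, and a direct computation shows it is invariant under right multiplication by the twisted diagonal
$$
\Delta_n(H_W) := \bigl\{\bigl((b_1^{(n)})^{-1} h b_1^{(n)}, \ldots, (b_r^{(n)})^{-1} h b_r^{(n)}\bigr) : h \in H_W\bigr\}.
$$
Identity \eqref{eqn:seq} is equivalent to the weak-$*$ convergence $\mu_n \to (\mathrm{d}y)^{\otimes r}$ against products $f_1 \otimes \cdots \otimes f_r$ with $f_i \in \mathsf C_c(Y_W)^W$.

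\emph{Limit measure.} The $W$-invariance of the $f_i$ allows me to descend to an $S$-arithmetic quotient of $H[r]$. On that quotient, the quantitative non-divergence estimates for orbits of semisimple subgroups developed in \cite{GO} give tightness of $\{\mu_n\}$, so every weak-$*$ subsequential limit $\mu_\infty$ is itself a probability measure. Combining the twisted-diagonal invariance of the $\mu_n$'s with Howe--Moore decay for the right regular representation of $\tilde H(\A)$ on $L^2(Y_W)$ (used through the lifting \eqref{eq:h_w}), I expect to show that $\mu_\infty$ is invariant under the ordinary diagonal $\Delta(H_W)$ acting by right multiplication. The adelic Ratner-type measure classification of \cite{GO} then asserts that every ergodic component of $\mu_\infty$ is the Haar measure on a closed orbit of a connected algebraic subgroup
$$
\Delta(H) \;\subseteq\; M \;\subseteq\; H[r].
$$

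\emph{Rigidity and conclusion.} By Proposition~\ref{prop:equal}, such an $M$ is, after a permutation of coordinates, of the form $\Delta[n_1] \times \cdots \times \Delta[n_k]$ with $\sum n_i = r$. If any block has size $n_\ell \ge 2$, then two coordinates $i \ne j$ lie in that block, and the projection of $\mu_\infty$ onto the $(i,j)$-factor is supported on a single $\Delta(H)$-orbit in $Y_W^2$; pulling this back through the definition of $\mu_n$ forces the ``relative displacement'' $(b_i^{(n)})^{-1} b_j^{(n)}$ to stay in a fixed compact set along the convergent subsequence, contradicting the hypothesis. Hence $M = H[r]$, every ergodic component equals $(\mathrm{d}y)^{\otimes r}$, and $\mu_\infty = (\mathrm{d}y)^{\otimes r}$. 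Since every subsequential limit is the same, the whole sequence $\mu_n$ converges, yielding \eqref{eqn:seq}. The main obstacle is the measure-rigidity step: one needs an adelic analogue of Ratner's theorem that applies to the (semisimple, not unipotent) diagonal $\Delta(H) \subset H[r]$, together with the matching non-divergence statement and the verification of $\Delta(H_W)$-invariance of $\mu_\infty$. All three ingredients are supplied by \cite{GO}; their joint application, rather than the final combinatorial step using Proposition~\ref{prop:equal}, is the technical heart of the proof.
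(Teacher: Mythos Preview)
Your overall strategy --- reinterpret the integral as a translate $g^{(n)}\cdot\nu_L$ of the diagonal measure on $Y_W^r$, use the rigidity results of \cite{GO} to pin down weak-$*$ limits, invoke Proposition~\ref{prop:equal} to classify the intermediate group $M$, and derive a contradiction from the divergence hypothesis --- is exactly the paper's approach. The gaps are in how you invoke \cite{GO}.

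First, the relevant input from \cite{GO} (recorded here as Theorem~\ref{thm:go}) is not a Ratner-type classification of $\Delta$-invariant measures reached via an ergodic decomposition; it is an equidistribution theorem for \emph{translates} $g^{(n)}\cdot\nu_L$ of a fixed homogeneous measure, and it requires $g^{(n)}\in G(F)\pi(\tilde G(\A))$. This is why the paper first uses $W$-invariance of the $f_i$ together with the decomposition $H_W=H(F)\pi(\tilde H(\A))\cdot W$ to replace each $b_i^{(n)}$ by some $s_i^{(n)}\in H(F)\pi(\tilde H(\A))$ (this is not an $S$-arithmetic descent). Your intermediate step of deducing $\Delta(H_W)$-invariance of $\mu_\infty$ from Howe--Moore is neither needed nor justified: the twisted diagonals $\Delta_n(H_W)$ do not converge to the untwisted one, and Theorem~\ref{thm:go} does not take ``$\Delta$-invariant probability measure'' as its hypothesis.

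Second, the output of Theorem~\ref{thm:go} is richer than ``$\Delta(H)\subseteq M$'': it produces a sequence $\delta^{(n)}\in G(F)$ with $\delta^{(n)}\Delta(\delta^{(n)})^{-1}\subset M$ \emph{and} elements $l^{(n)}\in\pi(\tilde L(\A))$ such that $\delta^{(n)}l^{(n)}g^{(n)}$ converges. The paper's contradiction uses precisely this convergence: after Proposition~\ref{prop:equal} and passing to a subsequence one finds $i\ne j$ in the same block of $(\delta^{(n)})^{-1}M\delta^{(n)}$, and projecting the convergence of $\delta^{(n)}l^{(n)}g^{(n)}$ to the $(i,j)$-factor forces $(s_i^{(n)})^{-1}s_j^{(n)}$ to converge (modulo the finite center), contradicting the hypothesis. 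Your alternative (``the $(i,j)$-projection of $\mu_\infty$ is supported on a single diagonal orbit, hence the displacements stay bounded'') does not follow from support considerations alone, since Theorem~\ref{thm:go} only identifies $\mu$ with $g\cdot\nu_{M_0}$ against $V$-invariant test functions; as written it also tacitly assumes the $r=2$ case of the theorem you are proving.
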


The proof of Theorem \ref{thm:mult} is based on an interpretion of integrals in \eqref{eqn:seq}
as a sequence of probability measures supported on $Y_W\times \cdots Y_W$ and on an analysis
of their limit behaviour using the theory of unipotent flows on adelic spaces
developed in \cite{GO}. The main technical tools are
a partial case of Theorem 1.7 of \cite{GO} combined with the description of intermediate
subgroups from Section \ref{sect:intermediate}.

For $g\in G(\A)$ and a measure $\nu$ on $G(F)\backslash G(\A)$, let
$g\cdot \nu$ be the push-forward of $\nu$
via the right multiplication by $g$.  

\begin{thm}[\cite{GO}, Theorem 1.7]
\label{thm:go}
Let $G$ be a connected semi-simple algebraic group defined over a number field $F$,
$H$ a connected semi-simple subgroup defined over $F$, and 
$V$ a compact subgroup of $G(\A)$ such that $V \cap G(\A_f)$ is open in $G(\A_f)$. 
Let $\nu_L$ be the unique $\tilde{L}(\A)$-invariant probability measure supported on 
$G(F) \pi(\tilde{L}(\A)) \subset G(F) \backslash G_V$, and let $g^{(n)}$ be a sequence in 
$G(F) \pi(\tilde{G}(\A)) \subset G_V$. Then
\begin{enumerate}
  \item If the centralizer of $L$ in $G$ is anisotropic over $F$, then the sequence of 
measures $\{g^{(n)} \cdot\nu_L\} $ is precompact in the weak$^*$ topology.
  \item Suppose that a probability measure $\mu$ on $G(F) \backslash G_V$ is a limit 
of the sequence 
$\{g^{(n)}\cdot \nu_L\}$ in the weak$^*$ topology. Then there exists a connected algebraic
subgroup $M$ of $G$ defined over $F$ such that:
      \begin{itemize}
        \item $\delta^{(n)} L (\delta^{(n)})^{-1} \subset M$ for some sequence $\delta^{(n)} \in G(F)$,
        \item for some sequence $l^{(n)} \in \pi(\tilde{L}(\A))$, $\delta^{(n)} l^{(n)}g^{(n)} \to g \in 
\pi(\tilde{G}(\A))$,
      \end{itemize}
and the limit measure $\mu$ can be described as follows: there is a finite index normal 
subgroup $M_0$ of $M(\A)$,
containing $M(F) \pi(\tilde{M}(\A))$, such that
for all $f \in \mathsf C_c(G(F) \backslash G_V)^V$,
            $$
            \int_{G(F) \backslash G_V} f \, \mathrm d \mu = \int_{G(F) \backslash G_V} 
f \, \mathrm d(g\cdot\nu_{M_0}),
            $$
where $\nu_{M_0}$ denotes the unique invariant probability measure supported on 
$G(F) M_0 \subset G(F) \backslash G_V$.
\end{enumerate}
\end{thm}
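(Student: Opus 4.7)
The plan is to translate the multiple mixing assertion \eqref{eqn:seq} into an equidistribution statement for translates of a diagonal measure on the product space $Y_W^r$ and then combine Theorem \ref{thm:go} with the classification of intermediate subgroups from Proposition \ref{prop:equal}. Set $G:=H^r$ and $V:=W^r$, so that the adelic homogeneous space $G(F)\backslash G_V$ may be identified with $Y_W^r=Y_W\times\cdots\times Y_W$. Let $\Delta\subset G$ be the diagonal copy of $H$, and let $\nu_\Delta$ denote the $\pi(\tilde\Delta(\A))$-invariant probability measure supported on the orbit $G(F)\pi(\tilde\Delta(\A))\subset G(F)\backslash G_V$; concretely, $\nu_\Delta$ is the push-forward of $\mathrm dy$ under the diagonal embedding $Y_W\hookrightarrow Y_W^r$. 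With $g^{(n)}:=(b_1^{(n)},\ldots,b_r^{(n)})\in G_V$ and $F(y_1,\ldots,y_r):=f_1(y_1)\cdots f_r(y_r)$, a change of variables rewrites the left-hand side of \eqref{eqn:seq} as $\int_{Y_W^r} F\,\mathrm d(g^{(n)}\cdot\nu_\Delta)$. Hence it suffices to show that $g^{(n)}\cdot\nu_\Delta$ converges weakly to the product Haar measure on $Y_W^r$.

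Apply Theorem \ref{thm:go} with $L=\Delta$. Since $H$ is simple, the centralizer of $\Delta$ in $H^r$ is the finite group $Z(H)^r$, which is trivially anisotropic; part (1) therefore gives precompactness of the sequence $\{g^{(n)}\cdot\nu_\Delta\}$. Let $\mu$ be any weak$^*$-subsequential limit. By part (2), there exist a connected $F$-subgroup $M\subseteq H^r$, sequences $\delta^{(n)}\in H^r(F)$ and $l^{(n)}\in\pi(\tilde\Delta(\A))$, and an element $g\in\pi(\tilde{H}^r(\A))$ such that
$$
\delta^{(n)}\Delta(\delta^{(n)})^{-1}\subseteq M,\qquad\delta^{(n)}l^{(n)}g^{(n)}\to g,
$$
and $\mu=g\cdot\nu_{M_0}$ for some finite-index normal subgroup $M_0\lhd M(\A)$ containing $M(F)\pi(\tilde M(\A))$.

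The structure of $M$ is then constrained by Proposition \ref{prop:equal}: the conjugate $M'_n:=(\delta^{(n)})^{-1}M\delta^{(n)}$ is a connected $F$-subgroup of $H^r$ containing $\Delta$, so it must be admissible, i.e., permutation equal to some $\Delta[n_1]\times\cdots\times\Delta[n_k]$ with $\sum n_i=r$. The finite list of admissible subgroups allows us to pass to a subsequence on which $M'_n=M'$ is constant. The corresponding $\delta^{(n)}$'s then lie in a single coset of the normalizer $N_{H^r}(M')$, and a direct computation (using that $H$ is simple, so $Z_H(h)=Z(H)$ for generic $h$) shows $N_{H^r}(M')\subseteq M'\cdot Z(H)^r$. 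In particular, whenever $i\ne j$ belong to a common block of the underlying partition, $\delta_i^{(n)}(\delta_j^{(n)})^{-1}$ ranges over a finite set.

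Suppose for contradiction that the partition is nontrivial, and pick such $i\ne j$. Projecting $\delta^{(n)}l^{(n)}g^{(n)}\to g$ onto the $i$-th and $j$-th factors yields $\delta_i^{(n)}l^{(n)}b_i^{(n)}\to g_i$ and $\delta_j^{(n)}l^{(n)}b_j^{(n)}\to g_j$, whence the algebraic identity
$$
(b_i^{(n)})^{-1}b_j^{(n)}=\bigl(\delta_i^{(n)}l^{(n)}b_i^{(n)}\bigr)^{-1}\cdot\delta_i^{(n)}(\delta_j^{(n)})^{-1}\cdot\bigl(\delta_j^{(n)}l^{(n)}b_j^{(n)}\bigr)
$$
expresses the left-hand side as a product whose outer factors converge in $H(\A)$ and whose middle factor lies in a finite set. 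Thus $(b_i^{(n)})^{-1}b_j^{(n)}$ remains in a compact subset of $H_W$, contradicting the divergence hypothesis. It follows that $M=H^r$ and $M_0\supseteq H_W^r$; combined with the $W^r$-invariance of $F$, this forces $\mu$ to equal the product Haar measure on $Y_W^r$. Since the subsequence was arbitrary, the full sequence $g^{(n)}\cdot\nu_\Delta$ converges as required. The main technical difficulty is this last step: one must carefully absorb the finite ambiguities coming from $Z(H)$ and the normalizer of $M'$ in order to convert the abstract Gorodnik--Oh conclusion into an explicit boundedness statement for $(b_i^{(n)})^{-1}b_j^{(n)}$.
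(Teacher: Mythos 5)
You were asked to prove Theorem \ref{thm:go} itself, i.e.\ the Gorodnik--Oh result (\cite{GO}, Theorem 1.7) describing precompactness and the weak$^*$ limits of the translated homogeneous measures $g^{(n)}\cdot\nu_L$ on $G(F)\backslash G_V$. Your argument does not prove this statement: it takes Theorem \ref{thm:go} as its principal input and uses it, together with Proposition \ref{prop:equal}, to deduce the multiple mixing property of Theorem \ref{thm:mult}. Relative to the assigned statement the argument is circular — the nontrivial content (precompactness when the centralizer of $L$ is $F$-anisotropic, and the identification of any limit measure with $g\cdot\nu_{M_0}$ for an intermediate connected $F$-subgroup $M$ containing a conjugate of $L$ and a finite index normal subgroup $M_0\supseteq M(F)\pi(\tilde{M}(\A))$) is exactly what you assume. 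In the paper this theorem is quoted from \cite{GO} without proof; an actual proof requires the measure-rigidity machinery for unipotent flows on adelic ($S$-arithmetic) homogeneous spaces — the Ratner/Margulis--Tomanov classification of invariant measures, nondivergence and linearization techniques, and the structure theory of the subgroups $G_V$ and $G(F)\pi(\tilde{G}(\A))$ developed in Section 4 of \cite{GO} — none of which appears in your write-up.

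As a proof of Theorem \ref{thm:mult}, which is evidently what you intended, your outline does parallel the paper's argument, but two points need repair. First, Theorem \ref{thm:go} requires $g^{(n)}\in G(F)\pi(\tilde{G}(\A))$, whereas your $g^{(n)}=(b_1^{(n)},\ldots,b_r^{(n)})$ only lies in $G_V=H_W^r$; the paper disintegrates the Haar measure of $Y_W$ over $W$, writes $wb_i^{(n)}=s_i^{(n)}w_i^{(n)}$ with $s_i^{(n)}\in H(F)\pi(\tilde{H}(\A))$ and $w_i^{(n)}\in W$, and applies Theorem \ref{thm:go} to $(s_1^{(n)},\ldots,s_r^{(n)})$, using compactness of $W$ to keep the divergence hypothesis and $W$-invariance of the $f_i$ to leave the integrals unchanged. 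Second, at the end $M_0$ need not contain $H_W^r$: it contains $G(F)\pi(\tilde{G}(\A))$ and is normal and coabelian in $G_V$, and one must use the $V$-invariance of the test function (as the paper does) to show that $g\cdot\nu_{M_0}$ integrates such functions like the normalized Haar measure on $G(F)\backslash G_V$. Your normalizer computation replacing the paper's argument that $(\sigma_i^{(n)})^{-1}\sigma_j^{(n)}\in Z(H)$ is in the right spirit, but as written these gaps, and above all the circularity with respect to the assigned statement, remain.
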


\begin{proof}[Proof of Theorem~\ref{thm:mult}]
We apply Theorem \ref{thm:go} to the groups
\begin{align*}
G &= H[r] = H \times \cdots \times H,\\
L &= \Delta[r]=\{(h, \ldots, h)\, |\,  h \in H\},\\
V &= W\times \cdots \times W.
\end{align*}
Since $H(F) \pi(\tilde{H}(\A))$ is a normal subgroup of $H_W$ (see \cite{GO}, Section~4)
and $W$ is compact, the normalized Haar measure on $Y_W$ can be written as
\begin{equation}\label{eq:measure}
\int_{Y_W} f\, \mathrm d y=\int_{Y_W\times W} f(uw)\, \mathrm d\nu_H(u) \mathrm d w,\quad 
f\in \mathsf C_c(Y_W),
\end{equation}
where $\nu_H$ is 
the unique $\tilde{H}(\A)$-invariant probability measure on 
$H(F) \pi(\tilde{H}(\A)) \subset Y_W$, and $\mathrm d w$ is the probability invariant measure on $W$.
Therefore, 
$$
\int_{Y_W} f_1(xb_1^{(n)}) \cdots f_r(x b_r^{(n)}) \, \mathrm d x = 
\int_{Y_W\times W} f_1(u w b_1^{(n)}) \cdots f_r ( u w b_r^{(n)}) 
\, \mathrm d\nu_H(u) \mathrm d w.
$$
If we show that for every fixed $w\in W$, we have 
$$
\lim_{n\to\infty} \int_{Y_W}  f_1(u w b_1^{(n)}) \cdots f_r ( u w b_r^{(n)}) 
\, \mathrm  d\nu_H(u) = \left(\int_{Y_W} f_1\,  \mathrm d y\right) \cdots \left(\int_{Y_W} f_r\,  \mathrm d y\right),
$$
then the theorem would follow from the Lebesgue dominated convergence theorem. 

We write $wb_i^{(n)}=s_i^{(n)}w_i^{(n)}$ for $s_i^{(n)}\in H(F) \pi(\tilde{H}(\A))$
and $w_i^{(n)}\in W$. Since the functions $f_i$ are assumed to be
$W$-invariant, 
$$
\int_{Y_W} f_1(uw b_1^{(n)}) \cdots f_r(uw b_r^{(n)}) \, \mathrm d\nu_H(u)
=\int_{Y_W} f_1(u s_1^{(n)}) \cdots f_r(u s_r^{(n)}) \, \mathrm d\nu_H(u).
$$
Since $W$ is compact, we have
\begin{equation}\label{eq:s}
(s_i^{(n)})^{-1}s_j^{(n)} =w_j^{(n)}\cdot (b_i^{(n)})^{-1}b_j^{(n)}\cdot (w_j^{(n)})^{-1}\to \infty
\end{equation}
for all $i\ne j$. We set
$$
g^{(n)}=(s_1^{(n)}, \ldots, s_r^{(n)})\in G(F) \pi (\tilde{G}(\A)).
$$
Then
$$
\int_{Y_W} f_1(u s_1^{(n)}) \cdots f_r(u s_r^{(n)}) \, \mathrm d\nu_H(u)
= \int_{G(F) \backslash G_V} (f_1 \otimes \cdots \otimes f_l) \,  \mathrm d (g^{(n)}\cdot \nu_L).
$$
Now it remains to determine the limit points of the sequence of measures
$g^{(n)}\cdot \nu_L$ in the weak$^*$ topology. We first note that
the centraliser of $L$ in $G$ 
is equal to $Z(H) \times \cdots \times Z(H)$. Hence, by Theorem~\ref{thm:go}(1), 
the sequence of measures $g^{(n)}\cdot \nu_L$ is precompact. 
Let $\mu$ be a probability measure on $G(F)\backslash G_V$
which is a limit point of this sequence.
The measure $\mu$ is described in Theorem \ref{thm:go}(2). In particular, we obtain that
there exist a connected algebraic subgroup $M$ of $G$ 
and a sequence $\delta^{(n)}\in G(F)$ such that
$$
L \subseteq (\delta^{(n)})^{-1} M \delta^{(n)} \subseteq G
$$
From the classification of intermediate subgroups in Proposition~\ref{prop:equal}, we deduce that
$$
M=\delta^{(n)} N_n (\delta^{(n)})^{-1},
$$
where $N_n$ is an admissible subgroup (in the sense of Definition \ref{def:adm}).

We claim that $M=G$. Indeed, suppose that $M\subsetneq G$. 
Since the number of admissible subgroups is finite,
we may assume, after passing to a subsequence, that
$N_n=N\subsetneq G$ is independent of $n$.
Then there exist indices $i \ne j$ such that 
for the corresponding projection map  $\pi_{ij}: G \to H \times H$,
we have $\pi_{ij}(N)=\Delta$, where $\Delta$ denotes the diagonal subgroup in $H\times H$.
Let $\delta = \delta^{(1)}$ and $\sigma^{(n)} = \delta^{-1} \delta^{(n)}$. Since
$$
\delta^{(1)} N (\delta^{(1)})^{-1}=\delta^{(n)} N (\delta^{(n)})^{-1},
$$
we obtain
$$
  \pi_{ij}(\sigma^{(n)})\, \Delta\, \pi_{ij}(\sigma^{(n)})^{-1} = \Delta,
$$
  and
  $$
  (1, (\sigma_i^{(n)})^{-1} \sigma_j^{(n)})\,\Delta\, (1, (\sigma_i^{(n)})^{-1} \sigma_j^{(n)}) = \Delta.
  $$
  This implies that
  $$
  z_n := (\sigma_i^{(n)})^{-1} \sigma_j^{(n)} \in Z(H).
  $$
By Theorem \ref{thm:go}(2), we also know that 
there exist $l^{(n)}\in\pi(\tilde{L}(\A))$ such that
the sequence $\delta^{(n)} l^{(n)} g^{(n)}$ converges. 
Then the sequence $\sigma^{(n)} l^{(n)} g^{(n)}$ converges too, and in particular,
$$
(\sigma_i^{(n)} l_i^{(n)} s_i^{(n)})^{-1} (\sigma_j^{(n)} l_j^{(n)} s_j^{(n)})
$$
converges. Since $l_i^{(n)} = l_j^{(n)}$ and $z_n \in Z(H)$, we obtain
\begin{align*}
(\sigma_i^{(n)} l_i^{(n)} s_i^{(n)})^{-1} (\sigma_j^{(n)} l_j^{(n)} s_j^{(n)})
& = (s_i^{(n)})^{-1}(l_i^{(n)})^{-1}(\sigma_i^{(n)})^{-1} \sigma_j^{(n)} l_j^{(n)} s_j^{(n)}\\
& =(s_i^{(n)})^{-1}(l_i^{(n)})^{-1}z_n l_j^{(n)} s_j^{(n)}\\
& = z_n^{-1} (s_i^{(n)})^{-1}s_j^{(n)}.
\end{align*}
Since $z_n$ runs over the finite set $Z(H)$,
it follows that $(s_i^{(n)})^{-1}s_j^{(n)}$ converges, which is a contradiction.
This proves that $M=G$.

By the last statement of Theorem~\ref{thm:go}, 
there is a finite index subgroup $M_0\subseteq M(\A)=G(\A)$, 
containing $G(F) \pi(\tilde{G}(\A))$, and $g\in\pi(\tilde G(\A))$ such that
for all $f \in \mathsf C_c(G(F) \backslash G_V)^V$,
            $$
            \int_{G(F) \backslash G_V} f \, \mathrm d \mu = \int_{G(F) \backslash G_V} 
f \, \mathrm d(g\cdot\nu_{M_0}),
            $$
Since $G(F) \pi(\tilde{G}(\A))$ is a normal coabelian subgroup of $G_V$
(see \cite{GO}, Section 4), $M_0$ is also normal coabelian.
As in (\ref{eq:measure}), the normalised Haar measure $\mathrm d z$ on $G(F)\backslash G_V$
is given by
$$
\int_{G(F)\backslash G_V} f\, \mathrm d z=\int_{G(F)\backslash G_V\times V} f(uv)\, \mathrm 
d\nu_{M_0}(u) \mathrm d v,\quad 
f\in \mathsf C_c(G(F)\backslash G_V),
$$
where $\mathrm d v$ is the normalised Haar measure on $V$.
For $f\in \mathsf C_c(G(F)\backslash G_V)^V$, using that $M_0$ is coabelian, we obtain
\begin{align*}
\int_{G(F)\backslash G_V} f\, \mathrm d z &=
\int_{G(F)\backslash M_0\times V} f(uvg)\, \mathrm d\nu_{M_0}(u) \mathrm d v\\
&=
\int_{G(F)\backslash M_0\times V} f(ugv)\, \mathrm  d\nu_{M_0}(u) \mathrm d v\\
&=
\int_{G(F) \backslash G_V} f \, \mathrm d(g\cdot\nu_{M_0}).
\end{align*}
This proves that every limit point of the sequence 
$g^{(n)}\cdot \nu_L$ is a probability measure which is equal to $\mathrm d z$
on $\mathsf C_c(G(F)\backslash G_V)^V$ which completes the proof of Theorem~\ref{thm:mult}. 
\end{proof}

\section{Counting rational points}
\label{sect:count}

Let $H$ be a connected simple algebraic group defined over a number field $F$,
$G=H^r$, and $X$ be a smooth projective equivariant compactification of 
$X^{\circ}:=H\backslash G$, where $H$ is embedded diagonally. 
Let $L$ be a line bundle on $X$
such that its class is in the interior of the cone of effective divisors $\Lambda_{\rm eff}(X)$. 
By Proposition~\ref{prop:picard}, $\Lambda_{\rm eff}(X)$ is spanned by 
the classes of boundary components $D_{\alpha}$   
of $X\setminus X^{\circ}$.  
In particular, 
$$
L=\sum_{\alpha\in \mathcal A} \lambda_{\alpha} D_{\alpha}, \quad \lambda_{\alpha}>0. 
$$
Let 
$$
\sH=\sH_{\mathcal L} : X^{\circ}(F)\ra \R_{>0}
$$
be a height corresponding to a suitable metrization of $L$ as in Section~\ref{sect:heights}.

\


The height function $\sH$ is invariant under a compact open subgroup $V$ of $G(\A_f)$,
and we may assume that $V=W\times\cdots \times W$ for a compact open subgroup $W$ of $H(\A_f)$.
We define the subgroups $G_V\subset G(\A)$ and $H_W\subset H(\A)$ as in (\ref{eq:h_w}).
The homogeneous space 
$$
X_V:=H_W\backslash G_V
$$
naturally embeds into $X^{\circ}(\A)$ as an open subset.

We equip $X^{\circ}(\A)$ with the Tamagawa measures $\mathrm d x$, defined as in Section 2 of \cite{CLT-igusa},
and define the height balls in $X_V$
by
$$
B_T=B_{T, \mathcal L} = \{ x \in X_V \, | \, \sH_{\mathcal L}(x) < T \}. 
$$

The following lemma, which is not hard to prove, is a crucial ingredient of the proof of our main theorem: 

\begin{lem}[Well-roundedness of adelic heigh balls II]  
 Let $\kappa>1$. There is a symmetric neighborhood $U$ of the identity in $G_V$ such that
\begin{equation}\label{eq:bt}
B_T\cdot U\subset B_{\kappa T}\quad\hbox{for all $T$.}
\end{equation}
\end{lem}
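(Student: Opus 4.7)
The plan is to factor the adelic height as a product $\sH=\prod_v\sH_v$ over the places of $F$ and build $U$ coordinate-wise, using $V$-invariance at the finite places and a uniform near-invariance statement at the archimedean ones. By construction the compact open subgroup $V\subset G(\A_f)$ leaves each finite-place $\sH_v$ right-invariant, so the finite-adelic component of $U$ may be taken to be $V$ itself. The content of the lemma is therefore concentrated at the archimedean places, where I would aim to prove: for every archimedean $v$ and every $\eta>0$, there exists a symmetric neighbourhood $U_v$ of the identity in $G(F_v)$ with
$$
(1-\eta)\sH_v(x)\;\leq\;\sH_v(xu)\;\leq\;(1+\eta)\sH_v(x),\qquad x\in X^\circ(F_v),\ u\in U_v.
$$
Granting this, choose $\eta>0$ with $(1+\eta)^{|S|}\le\kappa$ where $S$ is the set of archimedean places, set $U=\bigl(\prod_{v\in S}U_v\bigr)\times V$, and replace $U$ by $U\cap U^{-1}$ to ensure symmetry. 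Multiplying the local estimates over $v\in S$ then yields $\sH(xu)\le\kappa\sH(x)$ for all $x\in X_V$ and $u\in U$, which is exactly the desired inclusion $B_T\cdot U\subset B_{\kappa T}$.

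To prove the uniform claim at an archimedean $v$, I would exploit the compactness of $X(F_v)$, which holds because $X$ is projective. Cover $X(F_v)$ by finitely many open charts $\Omega$; on each chart the boundary divisors $D_\alpha$ meeting $\Omega$ are cut out by local equations $x_\alpha$, and the adelic metrization of $L=\sum_\alpha\lambda_\alpha D_\alpha$ presents the height as
$$
\sH_v(x)\;=\;\phi(x)\prod_\alpha|x_\alpha(x)|_v^{-\lambda_\alpha},
$$
where $\phi$ is continuous and strictly positive on the closure $\overline{\Omega}$. Because $G$ is connected, the right translations $R_u:X\to X$ for $u$ near $e$ are deformations of the identity, hence act trivially on the finite set of irreducible boundary components; in particular they preserve each $D_\alpha$ setwise. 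Consequently $x_\alpha\circ R_u$ and $x_\alpha$ are both local equations for the same divisor in $\Omega$, so their ratio $f_\alpha(x,u):=x_\alpha(xu)/x_\alpha(x)$ is a continuous non-vanishing function on $\overline{\Omega}\times G(F_v)$ with $f_\alpha(x,e)=1$. The continuous factor $\phi(xu)/\phi(x)$ is treated the same way, and compactness of $\overline\Omega$ yields the required uniform estimate on the chart; combining finitely many charts gives it on $X(F_v)$.

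The main obstacle I anticipate is precisely this boundary behaviour: a priori, both $\sH_v(xu)$ and $\sH_v(x)$ blow up as $x$ approaches the boundary, and one must argue that their \emph{ratio} nevertheless stays bounded and tends uniformly to $1$ as $u\to e$. The key point that resolves it is the setwise preservation of each $D_\alpha$ by the connected component of the identity in $G$, which forces $x_\alpha(xu)$ and $x_\alpha(x)$ to vanish on the same locus so that their ratio is a unit close to $1$. Once this is in hand, the remainder is a routine combination of compactness of $X(F_v)$, uniform continuity, and finite-place $V$-invariance.
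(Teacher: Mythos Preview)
The paper does not prove this lemma; it merely states it with the remark that it is ``not hard to prove.'' Your argument is the standard one and is correct. The reduction to archimedean places via right-$V$-invariance of the height is immediate (one small imprecision: the paper only asserts invariance of the \emph{global} height under $V$, but after shrinking $V$ to a product $\prod_v V_v$ one does get invariance of each local height $\sH_v$ under $V_v$, so this is harmless). At an archimedean place your key observation---that for $u$ in the identity component of $G(F_v)$ the map $R_u$ preserves each boundary component $D_\alpha$, so that $x_\alpha(xu)$ and $x_\alpha(x)$ cut out the same divisor and their ratio extends to a continuous nowhere-vanishing function equal to $1$ at $u=e$---is exactly the right mechanism, and compactness of $X(F_v)$ then yields the desired uniformity. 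Since the paper gives no proof, there is nothing further to compare.
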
 

\begin{lem}
\label{lem:volume}
Assume that the line bundle $L$ is in the interior of the effective cone. Then
$$
\vol(B_T) = c\cdot T^{a(L)}\log(T)^{b(L)-1}(1+o(1)) \quad \hbox{as $T\ra \infty$, }
$$ 
with $c>0$ and $a(L),b(L)$ as in (\ref{eqn:main}).
\end{lem}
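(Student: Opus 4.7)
The plan is to extract the asymptotic from the meromorphic continuation of the height zeta integral established in Theorem~\ref{thm:hecke}. Since $L = \sum_\alpha \lambda_\alpha D_\alpha$ with $\lambda_\alpha > 0$, the total height factors as $\sH(x) = \prod_\alpha \sH_{D_\alpha}(x)^{\lambda_\alpha}$, so specializing the multi-variable adelic integral of Theorem~\ref{thm:hecke} at the trivial character $\chi = 1$ and along the line $s_\alpha = s\lambda_\alpha$ produces the univariate height zeta function
$$
Z(s) := \int_{X_V} \sH(x)^{-s}\, \mathrm d x.
$$
Since $X_V$ is open in $X^\circ(\A)$ and $\sH$ is $V$-invariant, this integral inherits, up to a positive multiplicative constant, the same analytic behaviour as the full adelic integral treated in Theorem~\ref{thm:hecke}.

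By that theorem applied with $\chi = 1$ (so $\mathcal A(\chi) = \mathcal A$), we obtain
$$
Z(s) = \prod_{\alpha \in \mathcal A} \zeta_F(s\lambda_\alpha - \kappa_\alpha + 1) \cdot \Phi(s),
$$
with $\Phi$ holomorphic and bounded in vertical strips for $\Re(s) > a(L) - \epsilon$. Formulas (\ref{eqn:a}) and (\ref{eqn:b}) identify the rightmost singularity of $Z(s)$ as a pole at $s = a(L) = \max_\alpha \kappa_\alpha / \lambda_\alpha$ of order exactly $b(L) = \#\{\alpha : \kappa_\alpha/\lambda_\alpha = a(L)\}$; its leading Laurent coefficient, computed from the residues of the contributing $\zeta_F$-factors and the value $\Phi(a(L))$, supplies the positive constant $c$.

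From this point I would invoke a standard Tauberian theorem of Delange--Ikehara type, in the form used in \cite{CLT-igusa}. The two analytic prerequisites are a meromorphic continuation of $Z(s)$ past the pole line with a unique pole of order $b(L)$ at $s = a(L)$, and polynomial growth on vertical strips $\Re(s) = \sigma$ for $\sigma > a(L) - \delta$. The former is delivered by Theorem~\ref{thm:hecke}; the latter follows from the Phragm\'en--Lindel\"of convexity bounds for the Dedekind zeta function together with the vertical-strip boundedness of $\Phi$. The Tauberian conclusion is precisely the claimed asymptotic.

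The main obstacle I anticipate is verifying the polynomial-in-$|\Im(s)|$ growth of $Z(s)$ on vertical strips, needed to justify the Tauberian step. This is standard but requires combining the known convexity bounds for $\zeta_F$ with the uniform boundedness of $\Phi$ supplied by Theorem~\ref{thm:hecke}. Should the direct route prove delicate at a general adelic metrization, an alternative is to approximate the metrization by a smooth one, as in the final step of the proof of Lemma~\ref{lemm:balls}, and pass to the limit in the resulting sandwich bounds; continuity of the leading Laurent coefficient in the metrization ensures that the limiting constant $c$ is positive.
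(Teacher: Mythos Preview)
Your overall Tauberian strategy matches the paper's, but there is a genuine gap at the step where you pass from the integral over $X_V$ to the integral over $X^\circ(\mathbb A)$. You assert that because $X_V$ is open in $X^\circ(\mathbb A)$ and $\sH$ is $V$-invariant, the integral $Z(s)=\int_{X_V}\sH^{-s}$ ``inherits, up to a positive multiplicative constant, the same analytic behaviour'' as $\int_{X^\circ(\mathbb A)}\sH^{-s}$. This is not correct as stated: $X_V=H_W\backslash G_V$ is one piece of a finite decomposition of $X^\circ(\mathbb A)$ by cosets of $G_V$, and on the other pieces the height is not a constant multiple of its value on $X_V$ (it is only $V$-invariant, not $G(\mathbb A)$-invariant). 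So the ratio of the two integrals is not constant in $s$, and comparison of real values alone gives you the abscissa of convergence but not the meromorphic continuation or the exact pole order needed for the Tauberian step. Theorem~\ref{thm:hecke} gives analytic continuation of integrals over $X^\circ(\mathbb A)$, and you have no Euler product on $X_V$ to work with directly.

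The paper bridges this gap by finite Fourier analysis. Since $G_V$ is a closed normal coabelian subgroup of finite index in $G(\mathbb A)$, the characters $\chi$ of $G(\mathbb A)$ trivial on $H(\mathbb A)$ and on $G_V$ form a finite set $\mathfrak X$, and
\[
Z(s)=\frac{1}{[G(\mathbb A):H(\mathbb A)G_V]}\sum_{\chi\in\mathfrak X}\int_{H(\mathbb A)\backslash G(\mathbb A)}\sH(x)^{-s}\chi(x)\,\mathrm dx.
\]
Now Theorem~\ref{thm:hecke} applies to \emph{each} summand, not just the one with $\chi=1$. The point you are missing is the last clause of that theorem: $\mathcal A(\chi)=\mathcal A$ if and only if $\chi$ is trivial, so for $\chi\neq 1$ at least one Dedekind zeta factor is replaced by a Hecke $L$-function which is holomorphic at the critical point, and the corresponding summand has a pole of order strictly less than $b(L)$ at $s=a(L)$. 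Hence the trivial character contributes the full pole and there is no cancellation. Once you insert this character-sum step, the rest of your argument (specialization along $s_\alpha=s\lambda_\alpha$, convexity bounds, Tauberian theorem) goes through as you wrote it.
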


\begin{proof}
Using a standard Tauberian argument (see, for instance, \cite{CLT-igusa}), it suffices to show that 
$$
\mathsf Z(s) = \int_{X_V} \mathsf H(x)^{-s} \, \mathrm d x
$$
has an isolated pole at $a(L)$ of order $b(L)$ and that it admits a meromorphic continuation 
to $\Re(s)>a(L)-\epsilon$, for some $\epsilon >0$. 
We recall (see \cite{GO}, Section 4) that $G_V$ is a normal closed coabelian subgroup of $G(\A)$.
Let $\mathfrak{X}$ be the set of characters of $G(\A)$ invariant under $H(\A)$ and $G_V$.  
By the finite abelian Fourier analysis, for $g\in G(\A)$, we have 
$$
\sum_{\chi \in {\mathfrak X}} \chi(g) = \begin{cases}
0 & g \not\in H(\A)G_V ; \\
[G(\A):H(\A)G_V] & g \in H(\A)G_V. 
\end{cases}
$$
Thus,
$$
\mathsf Z(s) = \frac{1}{[G(\A):H(\A)G_V]} 
\sum_{\chi \in \mathfrak{X}} \int_{H(\A)\backslash G(\A)} \mathsf H(x)^{-s} \chi(x) \, \mathrm d x. 
$$
The meromorphic continuation of 
$$
\int_{H(\A)\backslash G(\A)} \mathsf H(x)^{-s} \chi(x) \, \mathrm d x
$$
is the content of Theorem \ref{thm:hecke}.  We need to show that the highest order pole does not cancel out. This follows from a standard 
argument (see e.g. the proof of Theorem 6.4 of \cite{TT}).  
\end{proof}

\begin{defn}
\label{defn:diagonal}
Let $X$ be an equivariant compactification of $X^{\circ}=H\backslash G$. 
Let 
$$
H[n]_{i,j}:=\{ \underline{h}=(h_1,\ldots, h_n) \,|\,h_i=h_j \} \subset G=H[n]
$$
be the small diagonal subgroup and let $Y_{ij}\subset X$ be the induced compactification of 
$\Delta[n]\backslash H[n]_{i,j}$.
A line bundle $L$ on $X$ is called {\em balanced} if for every 
$i \ne j$ one has
$$
(a(L|_{Y_{ij}}), b(L|_{Y_{ij}})) < (a(L), b(L)), 
$$
in the lexicographic ordering. 
\end{defn}

\begin{rem}
This property fails in simple examples: $X=\mathbf P^3\times\mathbf P^3$ 
considered as an equivariant compactification of 
$\mathbb G_m^6$ or $\mathbb G_a^6$, or $\PGL_2\times \PGL_2$, 
with $L=(\lambda_1,\lambda_2)$ and $\lambda_1\neq \lambda_2$.   
\end{rem}

\begin{lem}
\label{lem:balanced}
Assume that the line bundle $L$ is balanced. Then, for every smooth adelic metrization of $L$,
every compact subset $K$ of $H_W$  and $i\ne j$, one has
\begin{equation}
\label{eqn:bal}
\frac{\vol (B_{T} \cap \{ x_i^{-1} x_j \in K\})}{ \vol (B_T)} \to 0\quad\hbox{as $T \to \infty$.}
\end{equation}
\end{lem}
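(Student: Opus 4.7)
The plan is to fibre the adelic space $X^\circ(\A)$ over $H(\A)$ by the left-$H$-invariant morphism $\phi:(g_1,\ldots,g_n)\mapsto g_i^{-1}g_j$, so that the tube $\{x\in X^\circ(\A) : x_i^{-1}x_j\in K\}$ becomes a bundle over $K$ whose fibres are right translates of $Y_{ij}^\circ(\A)=(\Delta[n]\backslash H[n]_{i,j})(\A)$. I will bound the tube volume by the volume of a height ball on $Y_{ij}$, and then the balanced hypothesis together with Lemma~\ref{lem:volume} will force the ratio in~\eqref{eqn:bal} to tend to zero.

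First I set up the fibration. Since $H$ acts diagonally on $G=H^n$, the map $\phi$ descends to $\bar\phi:X^\circ(\A)\to H(\A)$ with $\bar\phi^{-1}(e)=Y_{ij}^\circ(\A)$. Denoting by $g_k\in G(\A)$ the element with $k\in H(\A)$ in position $j$ and identity elsewhere, right translation by $g_k$ is a homeomorphism $\bar\phi^{-1}(e)\to \bar\phi^{-1}(k)$, so the tube is parametrised as $\{y\cdot g_k : y\in Y_{ij}^\circ(\A),\; k\in K\}$.

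Next, by the $G(\A)$-equivariance of the adelic height system at almost all places, plus standard compactness at the remaining places, translation by $g_k$ for $k\in K$ multiplies $\sH_\mathcal L$ by a factor uniformly bounded between $C_K^{-1}$ and $C_K$; and the Tamagawa measure on $X^\circ(\A)$, restricted to $\bar\phi^{-1}(K)$, disintegrates up to a bounded Radon--Nikodym factor as the product of the Tamagawa measure on $Y_{ij}^\circ(\A)$ (transported by $g_k$) and Haar measure on $H(\A)$. Combining these two facts with Fubini gives
$$
\vol\bigl(B_T\cap\{x_i^{-1}x_j\in K\}\bigr)\le C'_K\,\vol(K)\,\vol\bigl(B^{Y_{ij}}_{C_KT}\bigr),
$$
where $B^{Y_{ij}}_T$ is the adelic height ball on $Y_{ij}^\circ(\A)$ with respect to the restricted metrization of $L|_{Y_{ij}}$.

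Finally I apply Lemma~\ref{lem:volume} to both $X$ (with $L$) and to a $G$-equivariant smooth resolution of $Y_{ij}$ (with the pullback of $L|_{Y_{ij}}$, whose invariants $a$ and $b$ coincide with those of $L|_{Y_{ij}}$ by Remark~\ref{rem:ab}), obtaining asymptotics of orders $T^{a(L)}\log(T)^{b(L)-1}$ and $T^{a(L|_{Y_{ij}})}\log(T)^{b(L|_{Y_{ij}})-1}$ respectively. Since $L$ is balanced, $(a(L|_{Y_{ij}}), b(L|_{Y_{ij}}))<(a(L), b(L))$ in the lexicographic order, so the ratio tends to $0$, establishing~\eqref{eqn:bal}.

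\textbf{Main obstacle.} The delicate point is the second step: making precise the disintegration of the Tamagawa measure on $X^\circ(\A)$ along $\bar\phi$ with fibre measure genuinely comparable to the Tamagawa measure on $Y_{ij}^\circ(\A)$, and checking that $L|_{Y_{ij}}$ inherits an adelic metrization to which Lemma~\ref{lem:volume} truly applies. This requires local analytic computations at the boundary divisors of $Y_{ij}\subset X$ (possibly singular, forcing the passage to a resolution), parallel to those in Section~\ref{sect:heights}, together with a careful treatment of the boundary strata where several $D_\alpha$ meet $Y_{ij}$ simultaneously.
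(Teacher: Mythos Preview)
Your argument is correct and follows the same overall strategy as the paper: fibre the tube $\{x_i^{-1}x_j\in K\}$ over $K$ by right translates of $Y_{ij}^\circ(\A)$, reduce to the height-ball volume on $Y_{ij}$, and conclude from the balanced hypothesis together with the volume asymptotics of Lemma~\ref{lemm:balls}/Lemma~\ref{lem:volume}. The one substantive difference is how the $K$-direction is handled. The paper keeps the translate parameter $k$ explicit in the fibre height integral $I(sL,k)=\int_{Y^\circ(\A)}\sH(sL,yk)^{-1}\,\mathrm dy$, argues that $(s-a)^bI(sL,k)$ is uniformly continuous and nonvanishing for $k\in K$ (only finitely many local factors move), integrates over $K$, and applies a Tauberian theorem to obtain a genuine asymptotic $\vol(B_T\cap\{x_i^{-1}x_j\in K\})=c\,T^{a(L|_Y)}\log(T)^{b(L|_Y)-1}(1+o(1))$. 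Your route---absorbing the $K$-translation into a single height-distortion constant $C_K$ and invoking the $Y_{ij}$-ball asymptotic once---is more elementary and produces only an upper bound, but that is all \eqref{eqn:bal} requires. The obstacle you flag (measure disintegration along $\bar\phi$ and applicability of the volume asymptotic to a resolution of $Y_{ij}$) is genuine but is exactly the same issue the paper faces; it is dealt with by noting that $Y_{ij}^\circ\cong \Delta[n-1]\backslash H^{n-1}$ is again of the form covered by the CLT--Igusa framework, so the local height integrals on $Y_{ij}$ have the required pole structure.
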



\begin{proof}
Let $M\subset G=H^n$ be the subgroup defined by $x_i=x_j$. 
Lemma~\ref{lemm:balls} implies that, for $T\ra \infty$, one has 
\begin{eqnarray*}
\vol(B_T)                         & = & c\, T^{a(X,L)}\log(T)^{b(X,L)-1}(1+o(1))\\
\vol(B_T\cap  \{x_i^{-1} x_j = 1\} )& = & c'\,T^{a(Y,L|_Y)}\log(T)^{b(Y,L|_Y)-1}(1+o(1)),
\end{eqnarray*}
where $Y=Y_{ij}$ is the induced equivariant compactification of 
$$
Y^\circ:=(H\backslash M) \subset (H\backslash G)=X^\circ\subset X
$$
and
$$
a(L), b(L), \quad \text{ resp. } \quad a(L|_Y), b(L|_Y)
$$
are the geometric invariants defined in Section~\ref{sect:geometry}. 
When $L$ is balanced, Equation~\ref{eqn:bal} follows, by definition.

Let $K\subset G(\A)$ be a compact subset. Consider translates $M_k$ of $M$ by $k\in K$. 
The asymptotic of 
$$
\vol(B_T\cap  \{x_i^{-1} x_j = k\} )
$$ 
is determined by analytic properties of the height integral
$$
I(\mathbf s, k):=\int_{Y^\circ(\A)}\mathsf H(\mathbf s, yk)^{-1} \, \mathrm dy=
\prod_v \int_{Y^\circ(F_v)}\mathsf H_v(\mathbf s, y_vk_v)^{-1} \, \mathrm dy_v
$$
where $Y^\circ=H\backslash M$ and $\mathrm dy$, $\mathrm dy_v$ are 
suitably normalized Haar measures.
Note that the adelic function 
$$
k\mapsto \mathsf H(\mathbf s, yk), 
$$
is continuous, with $\mathsf H_v(\mathbf s, y_vk_v)= \mathsf H_v(\mathbf s, y_v)$
for all but finitely many $v$.
Specialize the integral $I(\mathbf s, k)$
to $\mathbf s = sL$. We know that each local integral 
$$
\int_{Y^\circ(F_v)}\mathsf H_v(sL, y_vk_v)^{-1} \, \mathrm dy_v
$$
is holomorphic for $\Re(s)>a(L|_Y)-\epsilon$, for some $\epsilon >0$, and that
the Euler product $I(sL, k)$ has an isolated pole at $s=a:=a(L|_Y)$ 
of order $b:=b(L|_Y)$. 
When $L$ is balanced, Equation~\ref{eqn:bal} holds for translates $M_k$.

Moreover, the function 
$$
k\mapsto (s-a)^b\cdot I(sL, k)
$$
uniformly continuous and nonvanishing, 
for $\Re(s)>a-\epsilon$, since only finitely many $v$ are affected 
and the local integrals vary uniformly continuously with $k$. 
We conclude that 
$$
s \mapsto \int_{K} I(sL, k) \,\mathrm dk
$$
has an isolated pole at $s=a$ of order $b$.   
It follows that, for  $T\ra \infty$, 
$$
\vol(B_T\cap  \{ x_i^{-1} x_j \in K\})=\int_{K} \vol(B_T\cap  \{ x_i^{-1} x_j=k\})\,
\mathrm dk  = c \,T^a\log(T)^{b-1}(1+o(1)), 
$$
with some constant $c>0$. 
\end{proof}

\begin{rem}
\label{rem:balance}
If the height function is not balanced, 
the proper subvariety defined by 
$$
\{x_i^{-1}x_j = \text{constant}\}
$$ 
contributes a positive proportion of rational points to the asymptotic. 
This is an example of the saturation phenomenon observed in \cite{BT}, 
cf. Remark ~\ref{rem:abc}.
\end{rem}

As a corollary of Theorem \ref{thm:mult} we obtain a result about equidistribution of
period integrals on the space $Z_V=G(F) \backslash G_V$.
We denote by $\mathrm dy$ and $\mathrm dz$ the normalised Haar measures supported
on $Y_W=H(F)\backslash H_W$ and $Z_V=G(F)\backslash G_V$ respectively.
Let $\mathrm dx$ denote the restriction of the Tamagawa measure on $X_V$.
We consider $Y_W$ as a subspace of $Z_V$ embedded in $Z_V$ diagonally.

\begin{coro}\label{c:period}
If the line bundle $L$ is balanced, then for every $f\in \mathsf C_c(Z_V)$,
$$
\lim_{T\ra \infty}   \frac{1}{\vol (B_T)} \int_{B_T} \left(
  \int_{Y_W} f(yx) \, \mathrm dy\right)\, \mathrm dx =  \int_{Z_V} f \, \mathrm dz.
$$
\end{coro}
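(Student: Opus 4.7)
The plan is to combine Theorem \ref{thm:mult} (multiple mixing) with Lemma \ref{lem:balanced} (the volume estimate for degenerate strata) via a decomposition of $B_T$ into a generic and a degenerate part.

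First I would reduce to the case of a simple tensor $f = f_1 \otimes \cdots \otimes f_r$ with $f_i \in \mathsf C_c(Y_W)^W$. Both the Tamagawa measure on $X_V$ and the ball $B_T$ are $V$-invariant (the height being $V$-invariant), so averaging $f$ over $V$ leaves both sides of the desired identity unchanged; we may therefore assume $f \in \mathsf C_c(Z_V)^V$. Under the identifications $G_V = H_W^r$, $V = W^r$, and $Z_V = Y_W^r$, Stone--Weierstrass ensures that linear combinations of tensor products $f_1 \otimes \cdots \otimes f_r$ with $f_i \in \mathsf C_c(Y_W)^W$ are uniformly dense in $\mathsf C_c(Z_V)^V$, and by linearity it suffices to treat a single such tensor.

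Next I would recast Theorem \ref{thm:mult} in the following uniform form: for every $\epsilon > 0$ there exists a compact set $K \subset H_W$ such that for all $(b_1,\ldots,b_r) \in H_W^r$ with $b_i^{-1} b_j \notin K$ for every $i \ne j$, one has
$$
\left| \int_{Y_W} f_1(y b_1) \cdots f_r(y b_r) \, \mathrm d y \;-\; \prod_{i=1}^r \int_{Y_W} f_i \, \mathrm d y \right| < \epsilon.
$$
The equivalence with the sequential formulation of Theorem \ref{thm:mult} is a standard compactness argument: if it failed, exhausting $H_W$ by an increasing sequence of compacts $K_n$ and extracting $(b_1^{(n)}, \ldots, b_r^{(n)})$ violating the estimate would contradict Theorem \ref{thm:mult}.

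Given $\epsilon > 0$ and the associated $K$, I would split $B_T = B_T^{\mathrm{gen}} \sqcup B_T^{\mathrm{deg}}$ with $B_T^{\mathrm{deg}} := B_T \cap \bigcup_{i \ne j}\{x : x_i^{-1} x_j \in K\}$. The integrand $F(x) := \int_{Y_W} f(yx)\, \mathrm d y$ is uniformly bounded by $\|f\|_\infty \cdot \vol(Y_W)$, with $\vol(Y_W) < \infty$ since $H$ is semi-simple, so by Lemma \ref{lem:balanced} the average of $F$ over $B_T^{\mathrm{deg}}$ tends to $0$ as $T \to \infty$. On $B_T^{\mathrm{gen}}$ the reformulated mixing gives $|F(x) - \int_{Z_V} f\, \mathrm d z| < \epsilon$, so the average of $F$ over $B_T^{\mathrm{gen}}$ differs from $\int_{Z_V} f \, \mathrm d z$ by at most $\epsilon$; combining the two estimates and then letting $\epsilon \to 0$ yields the claim. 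The main obstacle I anticipate is the clean execution of the tensor-product reduction, in particular verifying that the $V$-averaging is compatible with the $W$-invariance needed to invoke Theorem \ref{thm:mult}, and that Stone--Weierstrass density holds on the relevant adelic quotient; once this is in place, the remainder follows the familiar pattern of mixing plus well-roundedness plus the balanced hypothesis.
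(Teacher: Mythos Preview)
Your proposal is correct and follows essentially the same route as the paper: reduce to tensor products via Stone--Weierstrass, then split $B_T$ into a generic part (handled by the uniform reformulation of Theorem~\ref{thm:mult}) and a degenerate part (handled by Lemma~\ref{lem:balanced}). The only cosmetic difference is the order of the two reductions---the paper first applies Stone--Weierstrass to get tensors $f_1\otimes\cdots\otimes f_r$ with $f_i\in\mathsf C_c(Y_W)$ and \emph{then} averages each $f_i$ over $W$ using the $V$-invariance of $B_T$, whereas you average first and then tensor---but both orderings work and the argument is otherwise identical.
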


\begin{proof}
By the Stone-Weierstrass theorem, it suffices to consider functions of the form 
$f = f_1 \otimes \cdots \otimes f_n$ with 
$f_i \in \mathsf C_c(Y_W)$. In this case,
$$
I(x):=\int_{Y_W} f(yx) \, \mathrm dy=\int_{Y_W} f_1(yx_1) \cdots f_r(y x_r) \, \mathrm d y.
$$
Since $B_T$ is invariant under $V=W \times \cdots \times W$,
$$
\int_{B_T} I(x)\, \mathrm dx=
\int_{B_T} \int_{Y_W} \bar f_1(yx_1) \cdots \bar f_r(y x_r) \, \mathrm d y\, \mathrm dx,
$$
where $\bar f_i(y)=\int_W f_i(yw)\, \mathrm dw$,
where $\mathrm dw$ denotes the normalised Haar measure on $W$.
Hence, we may assume that $f_i$'s are $W$-invariant. 

Given a compact subset $K$ of $H_W$, we set  
$$
B_T(K) = \{x \in B_T\, |\,  x_i^{-1} x_j \not\in K, \quad \forall i \ne j\}.
$$ 
By Theorem~\ref{thm:mult}, for every $\epsilon>0$,
there exists a compact subset $K$ of $H_W$ such that for all $x=(x_1, \ldots, x_r) \in B_T(K)$, we have
\begin{equation*}
\left| I(x) - \left(\int_{Y_W} f_1 \, \mathrm dy\right)
\cdots \left(\int_{Y_W} f_r\, \mathrm dy \right) \right| < \epsilon,
\end{equation*}
and 
\begin{equation}\label{eq:b1}
\int_{B_T(K)} I(x) \, \mathrm dx= 
\vol(B_T(K)) \left(\int_{Y_W} f_1 \, \mathrm dy\right)
\cdots \left(\int_{Y_W} f_r\, \mathrm dy \right) +O(\epsilon\, \vol(B_T(K))).
\end{equation}
Also,
\begin{equation}\label{eq:b2}
\int_{B_T\setminus B_T(K)} I(x) \, \mathrm dx=
O(\vol(B_T\setminus B_T(K))).
\end{equation}
Since the line bundle is balanced, it follows from Lemma \ref{lem:balanced} that
$$
\frac{\vol(B_T \setminus B_T(K))}{\vol(B_T)}\to 0\quad\hbox{as $T\to\infty$.}
$$
Hence, combining (\ref{eq:b1}) and  (\ref{eq:b2}), we deduce that
$$
\limsup_{T\to\infty} \left| \frac{1}{\vol(B_T)} \int_{B_T} I(x)
\, \mathrm dx -\left(\int_{Y_W} f_1\,\mathrm dy\right) \cdots
\left(\int_{Y_W} f_r\, \mathrm dy\right)\right| \leq \epsilon
$$
for every $\epsilon>0$, which proves the corollary.
\end{proof}

From Corollary \ref{c:period}, we deduce:
\begin{thm}
If the line bundle $L$ is balanced then
\begin{align*}
\left| X^\circ(F) \cap B_T \right|  &= \vol(H(F)\backslash H_W)^{1-r}\cdot \vol (B_T) (1+o(1))\\
&=  c\,\vol(H(F)\backslash H_W)^{1-r} \cdot T^{a(L)} (\log T)^{b(L)-1} (1+o(1)), \quad\hbox{as $T\to\infty$},
\end{align*}
where $c$ is as in Lemma \ref{lem:volume}.
\end{thm}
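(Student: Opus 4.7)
The plan is to deduce the asymptotic from Corollary~\ref{c:period} by taking $f$ to be an approximate identity at the trivial coset of $Z_V$, unfolding the resulting period integral as a sum of bumps centred at rational points, and sandwiching the count using well-roundedness.

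Fix a small symmetric open neighborhood $U$ of the identity in $G_V$ and a non-negative $V$-biinvariant $\psi_U \in \mathsf C_c(G_V)$ supported in $U$, normalised so that $\int_{G_V} \psi_U\,\mathrm d\mu_G = 1$ in the ambient Tamagawa Haar measure $\mathrm d\mu_G$ on $G_V$. Its automorphisation $f_U(G(F)g) := \sum_{\gamma \in G(F)} \psi_U(\gamma g)$ lies in $\mathsf C_c(Z_V)^V$, and, using $H(F) \subset G(F)$, unfolding gives
$$
\int_{H(F)\backslash H_W} f_U(yx)\,\mathrm d\mu_H(y) \;=\; \sum_{\gamma \in X^\circ(F)} \Psi_\gamma(x),
\qquad \Psi_\gamma(x) := \int_{H_W}\psi_U(\gamma y x)\,\mathrm d\mu_H,
$$
where $\mathrm d\mu_H$ denotes Haar on $H_W$ and the sum is indexed (via the natural bijection $G(F)/\iota H(F) \leftrightarrow X^\circ(F)$) so that $\Psi_\gamma$ is centred at the image of $\gamma \in X^\circ(F)$ in $X_V$. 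Using the decomposition $G_V = H_W \cdot X_V$ and the corresponding factorisation of Haar, each $\Psi_\gamma \in \mathsf C_c(X_V)$ is non-negative, supported in a small neighborhood of that image, and satisfies $\int_{X_V} \Psi_\gamma\,\mathrm dx = \int_{G_V}\psi_U\,\mathrm d\mu_G = 1$.

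The second well-roundedness lemma, $B_T \cdot U \subset B_{\kappa T}$ for $U$ small and $\kappa > 1$, implies that $\Psi_\gamma$ contributes its full mass $1$ to $\int_{B_T} \Psi_\gamma\,\mathrm dx$ whenever the image of $\gamma$ lies in $B_{T/\kappa}$ and vanishes on $B_T$ whenever it lies outside $B_{\kappa T}$. Hence
$$
\bigl|X^\circ(F) \cap B_{T/\kappa}\bigr| \;\le\; \int_{B_T} \sum_{\gamma \in X^\circ(F)} \Psi_\gamma(x)\,\mathrm dx \;\le\; \bigl|X^\circ(F) \cap B_{\kappa T}\bigr|.
$$
On the other hand, since the measures $\mathrm dy$ and $\mathrm dz$ of Corollary~\ref{c:period} are $\mathrm d\mu_H$ and $\mathrm d\mu_G$ normalised by $\vol(Y_W)$ and $\vol(Z_V)$ respectively, a standard unfolding gives $\int_{Z_V} f_U\,\mathrm dz = 1/\vol(Z_V)$, so Corollary~\ref{c:period} applied to $f_U$ yields
$$
\int_{B_T} \sum_\gamma \Psi_\gamma\,\mathrm dx \;=\; \vol(Y_W)\int_{B_T}\!\!\int_{Y_W} f_U(yx)\,\mathrm dy\,\mathrm dx \;=\; \frac{\vol(Y_W)}{\vol(Z_V)}\,\vol(B_T)(1+o(1)).
$$

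Finally, since $G = H^r$, $\tilde G = \tilde H^r$, and $V = W^r$, one checks $G_V = H_W^r$ and $G(F) = H(F)^r$, whence $Z_V = (H(F)\backslash H_W)^r = Y_W^r$ with $\vol(Z_V) = \vol(Y_W)^r$. Combining with the sandwich, letting $\kappa \to 1^+$, and absorbing the boundary annulus $B_{\kappa T}\setminus B_{T/\kappa}$ into $o(\vol(B_T))$ by Lemma~\ref{lemm:balls} gives $|X^\circ(F)\cap B_T| = \vol(Y_W)^{1-r}\vol(B_T)(1+o(1))$; Lemma~\ref{lem:volume} then supplies the explicit expansion in $T$. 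The main subtlety lies in the measure bookkeeping: one must verify that the Tamagawa measure on $X^\circ(\A)$ restricted to $X_V$ is compatible with the product Haar structure on $G_V = H_W^r$, so that both the mass-one normalisation of $\Psi_\gamma$ and the identity $\vol(Z_V) = \vol(Y_W)^r$ hold consistently. Once these identifications are in place, the rest is a routine combination of unfolding, well-roundedness, and Corollary~\ref{c:period}.
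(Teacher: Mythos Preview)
Your argument is correct and is essentially the same as the paper's: both automorphise an approximate identity $\tilde f\in \mathsf C_c(G_V)$ supported in a small symmetric $U$, unfold the $Y_W$-period of the resulting $f\in \mathsf C_c(Z_V)$ into a sum over $H(F)\backslash G(F)=X^\circ(F)$ of unit-mass bumps on $X_V$, sandwich via $B_T\cdot U\subset B_{\kappa T}$, apply Corollary~\ref{c:period} with $\int_{Z_V} f\,\mathrm dz=\vol(Z_V)^{-1}$, and finish using $\vol(Z_V)=\vol(Y_W)^r$ together with Lemma~\ref{lem:volume}. The only cosmetic difference is that the paper fixes the count over $B_T$ and varies the integration region (obtaining $\int_{B_{\kappa^{\pm1}T}}$), whereas you fix the integration region and vary the counting region; after the substitution $T\mapsto \kappa^{\pm1}T$ these are identical, and your appeal to Lemma~\ref{lemm:balls} for the annulus is exactly the paper's use of $\vol(B_{\kappa T})/\vol(B_T)\to\kappa^{a(L)}$.
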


\begin{proof}

Let $\mathrm d h$ be the Tamagawa measure on $H(\A)$ restricted to $H_W$.
Then the Haar measure $\mathrm d g$ on $G_V$ can be written as
$$
\int_{G_V} \tilde f\, \mathrm d g=\int_{X_V}\left(\int_{H_W} \tilde f(hx)\,\mathrm d h\right) \mathrm d x,\quad
\tilde f \in \mathsf C_c(G_V).
$$

Let $\tilde f \in \mathsf C_c(G_V)$ be a nonnegative function with
$\hbox{supp}(\tilde f)\subset U$ and $\int_{G_V} \tilde f\, \mathrm dg=1$.
Put
$$
f(g):=\sum_{\gamma\in G(F)} \tilde f(\gamma^{-1} g).
$$
Then, for every $x\in X_V$,
\begin{equation}\label{eq:eq}
\int_{Y_W} f(yx)\, \mathrm d y= \frac{1}{\vol(H(F)\backslash H_W)}
\sum_{\gamma\in H(F)\backslash G(F)} \int_{H_W} \tilde f(\gamma^{-1} hx)\, \mathrm d h.
\end{equation}
If $x\in B_{\kappa^{-1}T}$ and $\gamma^{-1}hx\in U$, then
$\gamma\in hxU$; using (\ref{eq:bt}) we have $H_W \gamma\in B_T$. Hence, (\ref{eq:eq}) implies that
\begin{align}\label{eq:1}
\vol(H(F)\backslash H_W) \int_{B_{\kappa^{-1}T}}&\left(\int_{Y_W} f(yx)\, \mathrm d y\right)\, \mathrm d x\\
\nonumber
=& \sum_{\gamma\in H(F)\backslash G(F)\cap B_T} \int_{H_W\times B_{\kappa^{-1}T}} 
\tilde f(\gamma^{-1} hx)\, \mathrm d h\mathrm d x\\
\nonumber
\le & \sum_{\gamma\in H(F)\backslash G(F)\cap B_T} \int_{G_V} 
\tilde f(\gamma^{-1} g)\, \mathrm d g\\
\nonumber
= &  |H(F)\backslash G(F)\cap B_T|
\end{align}
If $\gamma\in B_T$ and $\gamma^{-1}hx\in U$, then $x\in h^{-1}\gamma U$;
using (\ref{eq:bt}) we have $x\in B_{\kappa T}$.
Now (\ref{eq:eq}) implies that
\begin{align}\label{eq:2}
|H(F)\backslash G(F)\cap B_T| = &
\sum_{\gamma\in H(F)\backslash G(F)\cap B_T} \int_{G_V} 
\tilde f(\gamma^{-1} g)\, \mathrm d g\\
\nonumber = & 
\sum_{\gamma\in H(F)\backslash G(F)\cap B_T} \int_{H_W\times X_V} 
\tilde f(\gamma^{-1} hx)\, \mathrm d h\mathrm d x\\
\nonumber \le & 
\sum_{\gamma\in H(F)\backslash G(F)} \int_{H_W\times B_{\kappa T}} 
\tilde f(\gamma^{-1} hx)\, \mathrm d h\mathrm d x\\
\nonumber = & 
\,\vol(H(F)\backslash H_W)\int_{B_{\kappa T}}\left(\int_{Y_W} f(yx)\, \mathrm d y\right)\, \mathrm d x.
\end{align}

By Lemma \ref{lem:volume},
$$
\lim_{T\to\infty}
\frac{\vol(B_{\kappa T})}{\vol(B_T)}=\kappa^{a(L)}.
$$
Combining (\ref{eq:1}) with Corollary \ref{c:period}, we obtain
\begin{align*}
&\liminf_{T\to\infty}
\frac{|H(F)\backslash G(F)\cap B_T|}{\vol(B_T)}
\\
\ge& \left(\lim_{T\to\infty}
\frac{\vol(B_{\kappa^{-1}T})}{\vol(B_T)}\right)
\left(\lim_{T\to\infty} \frac{\vol(H(F)\backslash H_W)}{\vol(B_{\kappa^{-1}T})}
\int_{B_{\kappa^{-1}T}}\left(\int_{Y_W} f(yx)\, \mathrm d y\right)\, \mathrm d x\right)\\
=& \kappa^{-a(L)} \vol(H(F)\backslash H_W) \int_{Z_V} f\, \mathrm d z\\
=& \kappa^{-a(L)}\, \frac{\vol(H(F)\backslash H_W)}{\vol(G(F)\backslash G_V)}
=\kappa^{-a(L)}\, \vol(H(F)\backslash H_W)^{1-r}.
\end{align*}
Similarly, it follows from (\ref{eq:2}) that
\begin{align*}
\limsup_{T\to\infty}
\frac{|H(F)\backslash G(F)\cap B_T|}{\vol(B_T)}
\le \kappa^{ a(L)} \vol(H(F)\backslash H_W)^{1-r}.
\end{align*}
Since these estimates hold for all $\kappa>1$, we conclude that
$$
|H(F)\backslash G(F)\cap B_T|=
\vol(H(F)\backslash H_W)^{1-r} \vol(B_T)(1+o(1))
$$
as $T\to\infty$. 
Since $X^\circ(F)=H(F)\backslash G(F)$ by Corollary \ref{coro:gn},
this proves the first part of the theorem.
The second part follows from Lemma \ref{lem:volume}.
\end{proof}

Theorem~\ref{thm:main} follows by applying Proposition~\ref{prop:subgroup}, which insures
that the anticanonical line bundle $-K_X$ is balanced.


\end{document}